\theoremstyle{plain}
\newtheorem{theorem}{Theorem}[section]
\newtheorem{lemma}[theorem]{Lemma}
\newtheorem{corollary}[theorem]{Corollary}
\theoremstyle{definition}
\newtheorem{example}[theorem]{Example}
\newtheorem{definition}[theorem]{Definition}
\theoremstyle{remark}
\newcommand{\N}{\mathbb{N}}
\newcommand{\F}{\mathbb{F}}
\newcommand{\R}{\mathbb{R}}
\newcommand{\C}{\mathbb{C}}
\newcommand{\J}{\mathcal{J}}
\newcommand{\sqrtf}{\sqrt[{\leftroot{8}\uproot{-10}\scriptstyle \varepsilon}]}
\newcommand{\agmf}{AGM_{\F_q}}
\newcommand{\agmel}{AGM_{\F_{11}}}
\newcommand{\jf}{\J_{\F_q}}
\authors \else \@setauthors \fi
\authors \else \@setauthors \fi
\begin{document}
\title[AGM over $\F_q$, where $q\equiv5\pmod{8}$]{Arithmetic-geometric mean sequences over finite fields $\F_q$, where $q\equiv5\pmod{8}$}

\author{Nat\'{a}lia B\'{a}torov\'{a}, Stevan Gajovi\'c}

\maketitle
\begin{abstract}
Arithmetic-geometric mean sequences were already studied over real and complex numbers, and recently, Michael J. Griffin, Ken Ono, Neelam Saikia and Wei-Lun Tsai considered them over finite fields $\F_q$ such that $q \equiv 3 \pmod 4$. In this paper, we extend the definition of arithmetic-geometric mean sequences over $\F_q$ such that $q \equiv 5 \pmod 8$. We explain the connection of these sequences with graphs and show the properties of the corresponding graphs in the case $q \equiv 5 \pmod 8$. 
\end{abstract}

\section{Introduction.}
The arithmetic-geometric mean sequence (AGM) is a sequence of ordered pairs where the first element of the pair is the arithmetic mean of the previous pair, and the second element of the pair is the geometric mean of the previous pair. AGM over positive real numbers was first discovered by Lagrange in 1785 and rediscovered by Gauss a few years later, in 1791. AGM sequences over real or complex numbers have many applications.

AGM sequences can also be considered over finite fields. For example, Michael J. Griffin, Ken Ono, Neelam Saikia and Wei-Lun Tsai \cite{griffin2022agm} introduced AGM over finite fields with $q$ elements such that $q \equiv 3 \pmod 4$. In this case, they made a natural definition of infinite AGM for $q >3$.

However, it was not clear if we could define an infinite sequence over every finite field of odd characteristic. In this paper, we extend the definition over finite fields with $q$ elements such that $q \geq 29$ and $q \equiv 5 \pmod 8$. 

This paper is based on the bachelor thesis \cite{Naty-thesis} of the first author, cosupervised by the second author. Very recently, June Kayath, Connor Lane, Ben Neifeld, Tianyu Ni, Hui Xue, published a preprint \cite{NewAGM} in which they generalise results from \cite{griffin2022agm} and define and explain AGM sequences over all finite fields of odd characteristic. To do so, they used elliptic curves. On the other hand, our approach for $q\equiv 5\pmod{8}$ was based on elementary number theory. 

\subsection{Main results}\label{subsec:main-results}

The main results of this paper were entirely new to the authors, even though there is now a preprint \cite{NewAGM} dealing with even more general cases. We define the AGM sequences over $\F_q$ where $q\equiv 5\pmod{8}$, see Definition~\ref{def:AGMq-5-mod8} and describe their corresponding graphs. 

First, we prove that each vertex of that graph has 0 or 2 incoming and outcoming edges by Lemma~\ref{lem:0-2-children} and Corollaries~\ref{cor-two-children-one-in-cycle} and~\ref{cor-two-parents-one-in-cycle}. 
We further prove that a nontrivial connected component of the graph, i.e., which has at least one edge, always contains a cycle, see Theorem~\ref{theoremNonemptyEdges}. This is the property we want from the AGM sequences over finite fields. 

We fully describe the corresponding graph in Theorem~\ref{theoremGraph5mod8} and present a few sketches of such graphs; see Figures~2 and~3. Finally, we prove that for all $q\geq 29$, $q\equiv 5\pmod{8}$, there is at least one cycle in the corresponding graph, see Theorem~\ref{thm-nontrivial-graph-29}.  In addition, we prove slightly more generally that there is at least one connected component, all of whose vertices are squares (both components of the vertex are squares, see Definition~\ref{def-square/non-square-vertices}); see Corollary~\ref{cor-all-squares-component}.

However, we also slightly improve and generalise the results from \cite{griffin2022agm} in Theorems~\ref{theoremNumberOfComponents} and~\ref{theoremSquareandNonsquareVertices}. More precisely, we analyse components of the corresponding graphs more precisely and give a little better divisibility conclusion that in \cite[Theorem 1(4)]{griffin2022agm}.

\subsection{Structure of the paper}

For the convenience of readers, we start with briefly discussing various applications of the AGM sequences in Section~\ref{sec:applications}.

In Section~\ref{secAGMGraph}, we introduce the AGM sequences over finite fields and their corresponding graphs. We always use Definition~\ref{defJfq} for the corresponding graph of the AGM sequence. We mostly follow \cite{griffin2022agm}, but we include our improved results there. We summarise the results from \cite{griffin2022agm} in Section~\ref{chap3mod4}. 

Section~\ref{chap5mod8} contains our main results, which were introduced in \cref{subsec:main-results}.

In Section~\ref{sec-challenge}, we explain the strategy used to switch from $q\equiv 3\pmod{4}$ to $q\equiv 5\pmod{8}$ and why this strategy does not work for $q\equiv 1\pmod{8}$. We recall that so far, one needs to use elliptic curves for this case, as in \cite{NewAGM}. We finish this paper with a challenge for interested readers to try to discover an elementary solution to extend the AGM sequences for $q\equiv 1\pmod{8}$.

\subsection*{Acknowledgements} We warmly thank V\'{i}t\v{e}zslav Kala for many valuable comments and suggestions throughout this work. We also thank Steffen M\"{u}ller, Lazar Radi\v{c}evi\'c, Magdal\'{e}na Tinkov\'{a}, Jaap Top, and Pavlo Yatsyna for helpful discussions. We thank the anonymous referee for several useful comments that improved the paper. S.G. was supported by the Czech Science Foundation GAČR, grant 21-00420M and a Junior Fund grant for postdoctoral positions at Charles University. Most of the research in this paper forms part of N.B.'s bachelor thesis~\cite{Naty-thesis}, written at Charles University under S.G.'s cosupervision.

\section{AGM over $\R$ and $\C$ and their applications.}\label{sec:applications}

\begin{definition}\label{defAGM_R}
    Let $a, b$ be positive real numbers. The sequence $$AGM_\R(a,b)=((a_n,b_n))_{n=0}^\infty$$ is defined as follows. Let $a_0 = a$, $b_0 = b$ and for $n>0$, we define 
    \begin{align*}
        a_n &:= \frac{a_{n-1}+b_{n-1}}{2}, & b_n:&=\sqrt{a_{n-1}b_{n-1}}.
    \end{align*}

\end{definition}

As it is clear from Definition~\ref{defAGM_R} that $(a_n)_{n\in\N}$ and $(b_n)_{n\in\N}$ have the same limit, we denote $$M(a,b)\colonequals \lim_{n\rightarrow \infty} a_n =  \lim_{n\rightarrow \infty} b_n.$$

AGM sequences have applications in mathematical analysis, geometry, number theory, etc. Here, we explain some of these applications without going into too many details. Perhaps the most famous application is that one can use $AGM_\R$ for rapidly computing digits of $\pi$. The following algorithm was based on Gauss's work and was independently discovered by Brent and Salamin; see \cite[~p.48-49]{borwein1987pi} for more details. Gauss, in 1799, already noticed that AGM sequences are connected with elliptic integrals of the first kind; see more details in \cite[Theorem 1.1]{borwein1987pi}. We have
\[
\dfrac{1}{M(1,\sqrt{2})}=\dfrac{2}{\pi}\int_0^1\dfrac{dx}{\sqrt{1-x^2}}=\dfrac{2}{\pi}\int_0^{\frac{\pi}{2}}\dfrac{d\theta}{\sqrt{1+\sin^2\theta}}.
\]
In fact, it is interesting to note that elliptic integrals are invariant on the AGM sequence. For
\[
T(a,b)=\dfrac{2}{\pi}\int_0^{\frac{\pi}{2}}\dfrac{d\theta}{\sqrt{a^2\cos^2\theta+b^2\sin^2\theta}}
\]
and for $((a_n,b_n))_{n=0}^\infty=AGM_\R(a,b)$, we have $T(a,b)=T(a_n,b_n)$ for each $n$.

Further, one can use these results to compute that the length of the lemniscate given in the polar coordinates as $$r^2=\cos 2\theta$$ is $$\frac{2\pi}{M(1,\sqrt{2})}\approx5.244;$$ more details can be found at \cite[p.280]{Cox-AGM}. Also, we recommend \cite{borwein1987pi} and \cite{Cox-AGM} for finding more material on the AGM sequences and their applications.

Finally, we note that one can compute the inverse tangent function using AGM sequences in the following way (see \cite[p. 6-10]{Acton-arctg})
\[\tan^{-1}x=\lim_{n\rightarrow \infty}\dfrac{x}{a_n\sqrt{1+x^2}}, \; \text{where}\; ((a_n,b_n))_{n=0}^\infty=AGM_\R\left(\dfrac{1}{\sqrt{1+x^2}},1\right).\]

The AGM sequence can be defined over $\C$ as well. It is unclear which square root to take for the next term, and there are many choices (two in each step, hence, uncountably many!). However, all but countably many such sequences satisfy that both coordinates converge to 0. To avoid this, one introduces a condition for the starting pair $(a,b)$; see, e.g., \cite[Section 2, Proposition 2.1]{Cox-AGM}. The arithmetic-geometric mean $M(a,b)$ can be regarded as a multiple-valued function of $a$ and $b$. 

We briefly mention applications in number theory. The AGM sequences can be connected to the theta functions, hence to modular forms, see, e.g., \cite[Sections 2, 3]{borwein1987pi} or \cite{Cox-AGM}. Hence, it is not surprising that AGM sequences can be applied to the arithmetic of elliptic curves, for instance, for computing canonical heights on them (e.g., see \cite{Cremona-Thongjunthug} and the PhD thesis of Thongjunthug \cite{Thongjunthug-PhD}) or to compute or for point counting and computing the Serre-Tate lift (e.g., see the PhD thesis of Carls \cite{Carls-PhD}). So, the connection between elliptic curves and AGM sequences could serve as a motivation in \cite{griffin2022agm} and \cite{NewAGM} for the authors to use elliptic curves to study the AGM sequences over finite fields.

\section{AGM as a directed graph over finite fields.}\label{secAGMGraph}

In the whole paper, let $q$ be an odd prime power and $\F_q$ the field with exactly $q$ elements. We briefly recall some well-known properties of $\F_q$ that we will use many times.

\begin{definition}
    Let $x \in \F_q^\times$, then $x$ is a quadratic residue if there exists $y \in \F_q$ such that 
    \[y^2 = x.\]
    If there is not such $y$, $x$ is a quadratic non-residue. We will also call quadratic residues squares and non-residues non-squares.
\end{definition}

\begin{definition}
    Let $\phi_q \colon \F_q^\times \to \{\pm1\}$ such that for every $a \in \F_q$ the following holds: 
    \[
    \phi_q(a) =
    \begin{cases}
    1 & \text{if $a$ is a quadratic residue};\\
    -1 & \text{if $a$ is a quadratic non-residue}.
\end{cases}
    \]
\end{definition}

It is well-known that $\phi_q \colon \F_q^\times \to \{\pm1\}$ is a homomorphism.\\

We use some standard notions from the graph theory, which can be found, e.g., in \cite[Chapters 1, 10]{graphsbook}. We just recall one definition.

\begin{definition}
    Let $u,v$ be vertices in a directed graph. Then we will call $u$ a parent of $v$ if there is an edge $u \rightarrow v$. We will call $u$ a child of $v$ if there is an edge $v \rightarrow u$.
\end{definition}

Now, we are able to define a directed graph which will represent the AGM sequences over $\F_q$. 

\begin{definition}\label{defJfq}
Define a directed graph $\J_{\F_q}=(V,E)$ as follows: $V = \{(a,b) \in {\F_q^\times}^2 \mid \phi_q(ab)=1, a \neq \pm b\}$ and $(a,b) \rightarrow (c,d)$ is an edge if and only if 
    \begin{align*}
        c &= \frac{a+b}{2} & d^2 &=ab.
    \end{align*}
We will denote the components of $\J_{\F_q}$ as $J_i$ and the number of components $d(\F_q)$.
\end{definition}

Even though the following lemma is mostly contained in \cite[Theorem 1 (3)]{griffin2022agm}, for the convenience of the reader, we state it here with proof and emphasize that it does not depend on $q$ modulo 4.

\begin{lemma}\label{lemParents}
    Let $(a,b)$ be a vertex of $\jf$. Then $(a,b)$ has a parent if and only if $\phi_q(a^2-b^2)=1$. Furthermore, if $(a,b)$ has a parent, there are exactly two parents, namely, $(a + S, a - S)$ and $(a-S,a+S)$, where $S^2 = a^2-b^2$.
\end{lemma}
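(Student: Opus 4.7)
The plan is to translate the condition that $(c,d)$ is a parent of $(a,b)$ into a pair of equations in $c$ and $d$, solve them as a quadratic, and read off when solutions exist and how many there are.

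First I would unpack the definition: $(c,d) \to (a,b)$ is an edge precisely when $c + d = 2a$ and $cd = b^2$. Hence $c$ and $d$ are the two roots of the polynomial $X^2 - 2aX + b^2 \in \F_q[X]$. Its discriminant equals $4(a^2 - b^2)$, and since $q$ is odd and $a \neq \pm b$ (because $(a,b)$ is a vertex), this discriminant is a nonzero element of $\F_q$. Therefore the polynomial splits over $\F_q$ if and only if $a^2 - b^2$ is a nonzero square, i.e.\ $\phi_q(a^2-b^2)=1$. This gives the ``if and only if'' part.

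When $\phi_q(a^2-b^2)=1$, pick $S \in \F_q^\times$ with $S^2 = a^2 - b^2$. The two roots are then $a+S$ and $a-S$, so the ordered pairs $(c,d)$ satisfying the edge equations are exactly $(a+S, a-S)$ and $(a-S, a+S)$. (Note that $S$ and $-S$ give the same two ordered pairs, so one does not double-count.) This yields exactly two parents, provided both ordered pairs are actually vertices of $\J_{\F_q}$.

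The remaining step, which is the only place one has to be a little careful, is to verify that $(a \pm S, a \mp S)$ belong to $V$. We check in turn: the product $(a+S)(a-S) = a^2 - S^2 = b^2$ is a nonzero square, so both coordinates lie in $\F_q^\times$ and $\phi_q\!\bigl((a+S)(a-S)\bigr)=1$; the equality $a+S = a-S$ would force $S=0$ and hence $a^2 = b^2$, contradicting $a \neq \pm b$; and $a+S = -(a-S)$ would force $a=0$, contradicting $a \in \F_q^\times$. Thus both ordered pairs are genuine vertices, completing the proof. No step presents a serious obstacle; the argument is essentially just solving a quadratic, with the small bookkeeping above to confirm membership in $V$.
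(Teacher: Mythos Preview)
Your proof is correct and follows essentially the same approach as the paper: both recognize that a parent $(c,d)$ must satisfy $c+d=2a$ and $cd=b^2$, reduce to the quadratic $X^2-2aX+b^2$, and check that the resulting roots $a\pm S$ give genuine vertices. Your verification that $(a\pm S,a\mp S)\in V$ is in fact slightly more explicit than the paper's, which asserts $a+S\neq\pm(a-S)$ without spelling out the contradictions.
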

\begin{proof}
Suppose that $(a,b)$ has a parent $(A,B)$. We have $A+B=2a$ and $AB = b^2$, so $A$ and $B$ are roots of the quadratic polynomial $x^2-2ax + b^2 = (x-A)(x-B)$. Hence, the only two parents of $(a,b)$ are exactly $(A,B)$ and $(B,A)$. Furthermore, $\phi_q(a^2-b^2) = \phi_q(\frac{1}{4}(A-B)^2)=1$. 
    
On the other hand, if $\phi_q(a^2-b^2)=1$, there exist $S \in \F_q^{\times}$ such that $S^2 = a^2-b^2 \neq 0$. 
Then, we can consider the vertex $(a+S,a-S)$ as $a+S \neq \pm (a-S)$ and 
    \[\phi_q\left((a+S)(a-S)\right) = \phi_q(a^2-S^2) = \phi_q(a^2 - (a^2-b^2)) = \phi_q(b^2) = 1.\]
Since we have
    \[\frac{(a+S)+(a-S)}{2} = a\;\;\text{and}\;\;(a+S)(a-S) = b^2,\]
it follows that $(a+S, a-S)$ is a parent of $(a,b)$ and similarly for $(a-S, a+S)$.
\end{proof}

The graph $\J_{\F_q}$ has a group of natural automorphisms, as proven in \cite[Theorem 1(4)]{griffin2022agm}.

\begin{lemma}\label{lemmagroupautomorphisms}
Every $\alpha \in \F_q^\times$ induces a distinct graph automorphism 
\begin{align*}
    \varphi_\alpha \colon \J_{\F_q} &\to \J_{\F_q}\\
    (a,b) & \mapsto (\alpha a,\alpha b).\\
\end{align*}
Furthermore, $G=\{\varphi_\alpha \mid \alpha \in \F_q^\times\} \simeq \F_q^\times$ is a group under the obvious group law.
\end{lemma}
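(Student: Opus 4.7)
The plan is to verify, in sequence, that (i) $\varphi_\alpha$ sends vertices to vertices, (ii) it sends edges to edges, (iii) its set-theoretic inverse is $\varphi_{\alpha^{-1}}$ (hence it is a graph automorphism), (iv) different $\alpha$ give different maps, and (v) the assignment $\alpha \mapsto \varphi_\alpha$ is a group homomorphism. None of these steps looks hard; the only facts we really use are that $\phi_q$ is multiplicative and that $\alpha \neq 0$.

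First, for well-definedness at the level of $V$, let $(a,b) \in V$. Then $\alpha a, \alpha b \in \F_q^\times$ because $\alpha \neq 0$; the multiplicativity of $\phi_q$ stated in the excerpt gives $\phi_q((\alpha a)(\alpha b)) = \phi_q(\alpha^2)\phi_q(ab) = 1$; and $\alpha a = \pm \alpha b$ would force $a = \pm b$, which is excluded. Next, for edges: if $(a,b)\to (c,d)$, then $\alpha c = (\alpha a + \alpha b)/2$ and $(\alpha d)^2 = \alpha^2 d^2 = (\alpha a)(\alpha b)$, so $(\alpha a, \alpha b) \to (\alpha c, \alpha d)$ is an edge, and the same argument applied to $\alpha^{-1}$ shows the reverse implication. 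Hence $\varphi_\alpha$ and $\varphi_{\alpha^{-1}}$ are mutually inverse graph endomorphisms, which together with edge preservation makes each of them a graph automorphism.

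For distinctness, suppose $\varphi_\alpha = \varphi_\beta$. Picking any vertex $(a,b)$ (the statement is vacuous if $V = \emptyset$), we read off $\alpha a = \beta a$ in $\F_q^\times$, hence $\alpha = \beta$. Finally, a direct computation gives $\varphi_\alpha \circ \varphi_\beta = \varphi_{\alpha \beta}$, so $\alpha \mapsto \varphi_\alpha$ is an injective group homomorphism $\F_q^\times \hookrightarrow \operatorname{Aut}(\jf)$; its image $G$ is therefore isomorphic to $\F_q^\times$, as claimed.

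No genuine obstacle arises here: each verification is a one-line computation using $\phi_q(\alpha^2)=1$ and the fact that both defining conditions for an edge, $c=(a+b)/2$ and $d^2 = ab$, are homogeneous of total degree one and two, respectively, under the diagonal scaling by $\alpha$.
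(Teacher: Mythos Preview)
Your proof is correct. The paper itself does not supply a proof of this lemma; it simply records the statement and attributes it to \cite[Theorem 1(4)]{griffin2022agm}. Your direct verification---checking that the vertex and edge conditions are preserved under diagonal scaling, that $\varphi_{\alpha^{-1}}$ inverts $\varphi_\alpha$, and that $\alpha\mapsto\varphi_\alpha$ is an injective homomorphism---is exactly the routine argument one would expect and is entirely adequate.

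One small caveat worth flagging: your parenthetical ``the statement is vacuous if $V=\emptyset$'' is not quite right. If $V=\emptyset$ (which does occur, e.g.\ for $q=3$ or $q=5$ as the paper itself notes), then every $\varphi_\alpha$ is the empty map, so the maps are \emph{not} distinct and the distinctness claim actually fails rather than being vacuous. This is a harmless edge case that the paper's later uses of the lemma implicitly avoid (the orbit-counting arguments only matter when there are vertices), but strictly speaking the lemma as stated needs $V\neq\emptyset$ for the word ``distinct'' and for the isomorphism $G\simeq\F_q^\times$ to hold.
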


We look at the components of $\J_{\F_q}$. Let $N_n$ denote the number of components of $\jf$ with $n$ vertices. Then, \cite[Theorem 1(4)]{griffin2022agm} states that $(q-1)|nN_n$. However, we can prove a slightly better result. 
Namely, for any positive integer $n$, denote by $M_n$ the number of oriented cycles of the length $n$. When $q\equiv 3\pmod{4}$, \cite[Theorem 1(3,4)]{griffin2022agm} implies that in this situation we have  $nM_n=\frac{1}{2}nN_n$.

\begin{theorem}\label{theoremNumberOfComponents}
For any positive integer $n$, we have $(q-1)|nM_n$.
\end{theorem}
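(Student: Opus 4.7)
The plan is to count \emph{oriented cycles with a marked starting vertex} and exploit the free action of the automorphism group $G$ from Lemma~\ref{lemmagroupautomorphisms}.

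First, let $S$ denote the set of sequences $(v_0, v_1, \ldots, v_{n-1})$ of pairwise distinct vertices of $\J_{\F_q}$ such that $v_0 \to v_1 \to \cdots \to v_{n-1} \to v_0$ is a directed cycle. Every oriented cycle of length $n$ gives rise to exactly $n$ such sequences (one for each choice of starting vertex), so $|S| = n M_n$. Next, the group $G \simeq \F_q^\times$ acts on $S$ component-wise, that is,
\[
\alpha \cdot (v_0, v_1, \ldots, v_{n-1}) := (\varphi_\alpha(v_0), \varphi_\alpha(v_1), \ldots, \varphi_\alpha(v_{n-1})).
\]
Since $\varphi_\alpha$ is a graph automorphism by Lemma~\ref{lemmagroupautomorphisms}, the image sequence still traces out a directed cycle in $\J_{\F_q}$ with the same length and with distinct entries, so the action is well-defined on $S$.

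The main point is that this action is \emph{free}. Indeed, suppose $\alpha \cdot (v_0, \ldots, v_{n-1}) = (v_0, \ldots, v_{n-1})$, and write $v_0 = (a,b)$. Then $\varphi_\alpha(v_0) = (\alpha a, \alpha b) = (a,b)$, and because $a \in \F_q^\times$ (vertices of $\J_{\F_q}$ have nonzero entries by Definition~\ref{defJfq}), we conclude $\alpha = 1$. Hence every $G$-orbit in $S$ has size exactly $|G| = q-1$, and by the orbit decomposition $(q-1) \mid |S| = n M_n$, as desired.

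The only thing to verify is that the action really does send $S$ into itself, which amounts to checking that $\varphi_\alpha$ preserves both adjacency and the property of being a valid vertex; both are given by Lemma~\ref{lemmagroupautomorphisms}. I do not anticipate a genuine obstacle here: once one sets up the marked-cycle count, the argument is essentially the standard orbit-stabilizer trick, and the freeness of the action follows from the fact that vertices avoid the coordinate axes.
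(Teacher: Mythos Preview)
Your proof is correct and follows essentially the same idea as the paper's: exploit the free action of $G\simeq\F_q^\times$ on a set of size $nM_n$ and conclude by orbit counting. The paper acts on the set of \emph{vertices} lying in length-$n$ cycles and identifies that set's cardinality with $nM_n$; your formulation via marked cycles is a slightly cleaner packaging, since $|S|=nM_n$ holds by definition without any appeal to cycles being vertex-disjoint.
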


\begin{proof}
     Consider $G$ from Lemma \ref{lemmagroupautomorphisms}, the group of graph automorphisms $\varphi_\alpha$ such that $\alpha \in \F_q^\times$. This group acts on the $\J_{\F_q}$. Note that every orbit of a vertex has size $q-1$ as $(\alpha a,\alpha b)\neq (\beta a,\beta b)$ for $\alpha\neq\beta$ and $(a,b)\in V$. Furthermore, a vertex from a cycle is mapped to a vertex from a cycle of the same size. This implies that $q-1$ divides the number of all vertices which are in the cycles of the same length, so $q-1 \mid nM_n$.

\end{proof}

We can make one more observation which has not been mentioned in \cite{griffin2022agm}. 

\begin{definition}\label{def-square/non-square-vertices}
We call the vertex $(a,b)\in V$ a square vertex if $a$ and $b$ are both squares. The other vertices in $V$ are called non-square vertices.    
\end{definition}

\begin{theorem}\label{theoremSquareandNonsquareVertices}
\hfill
\begin{itemize}
\item If the edge in the component connects two square vertices, then all the vertices in the component are square vertices.
\item If the edge in the component connects two non-square vertices, then all the vertices in the component are non-square.
\item If the edge in the component connects a square vertex and a non-square vertex, then in the component, the square and non-square vertices alternate, so there is no edge in this component connecting two square vertices or two non-square vertices. 
\end{itemize}
\end{theorem}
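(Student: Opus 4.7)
The plan is to introduce the invariant $\chi(a,b) \colonequals \phi_q(a) = \phi_q(b)$ on vertices (well-defined since $\phi_q(ab) = 1$) and prove that the \emph{edge product} $\chi(v)\chi(v')$ is constant on each connected component of $\J_{\F_q}$. Once this is established, the trichotomy follows at once: on a component where the edge product is $+1$, every edge preserves $\chi$ and all vertices share the same square class (statements 1 and 2); on a component where it is $-1$, every edge flips $\chi$ and squares and non-squares alternate along every edge (statement 3).

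To show the edge product is constant on a component, it suffices to show that all edges incident to a single vertex $v = (c, d)$ yield the same product, after which propagation through shared vertices extends the equality to the whole component. By Lemma~\ref{lemParents}, every parent of $v$ has first coordinate $c + S$ or $c - S$ with $S^2 = c^2 - d^2$, and the vertex condition $\phi_q((c+S)(c-S)) = 1$ forces $\phi_q(c+S) = \phi_q(c-S)$, so all parents of $v$ share a common $\chi$-value. Every child of $v$ has first coordinate $(c+d)/2$, so all children share the $\chi$-value $\phi_q((c+d)/2)$. It remains to show that when $v$ has both a parent $(A, B)$ and a child $(E, F)$, these two common values agree, i.e., $\phi_q(A) = \phi_q(E)$.

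The crux is the algebraic identity
\[
(A + d)^2 = A^2 + 2Ad + d^2 = A^2 + 2Ad + AB = A(A + B + 2d) = A(2c + 2d) = 4AE,
\]
using $d^2 = AB$, $A + B = 2c$ (from $(A, B)$ being a parent of $(c, d)$) and $c + d = 2E$ (from $(E, F)$ being a child). Because $A, E \in \F_q^\times$, this exhibits $AE$ as a nonzero square, whence $\phi_q(A) = \phi_q(E)$. The argument is uniform in $q$: whether $v$ has two children or only one depends on $q \bmod 4$, but the equality of $\chi$-values among all children, among all parents, and the parent--child identity all hold regardless. The main obstacle I would expect is spotting this single identity, which says that ``one step backward followed by one step forward'' from a common vertex produces two vertices whose first coordinates lie in the same square class; once it is in hand, the rest is routine bookkeeping.
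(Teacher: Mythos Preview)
Your argument is correct, and its core is the same two–step invariance that the paper isolates as Lemma~\ref{lemma-two-steps}: a grandparent and a grandchild along a directed $2$-path lie in the same square class. The paper proves this by writing $a=A^2$, $b=B^2$ and computing $e=\big(\tfrac{A\pm B}{2}\big)^2$, then handles the non-square case by pushing through the automorphism $\varphi_\alpha$ with $\alpha$ a non-square. You reach the same conclusion via the single identity $(A+d)^2=4AE$, which is exactly the paper's computation rewritten without assuming $A$ (their $a$) is a square; this lets you avoid the case split and the automorphism trick entirely. Your edge-product formulation is also slightly more explicit than the paper's closing sentence: you separately verify that two parents of $v$ share a $\chi$-value (via $(c+S)(c-S)=d^2$) and that two children do (same first coordinate), whereas the paper leaves these easy checks implicit when passing from the $2$-path lemma to the component-wide statement. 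In short, same idea, but your packaging is a bit cleaner and more self-contained.
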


\begin{proof}
The proof will follow from the following lemma.

\begin{lemma}\label{lemma-two-steps}
Consider the path $(a,b) \rightarrow (c,d) \rightarrow (e,f)$. Then $(a,b)$ is a square vertex if and only if $(e,f)$ is a square vertex.    
\end{lemma}

\begin{proof}
Suppose $(a,b)$ is a square vertex, so $a = A^2$ and $b = B^2$. Then 
    \begin{align*}
        c &= \frac{A^2+B^2}{2} & d^2 &= A^2B^2,
    \end{align*}
    which means that $d =\pm AB$, implying
 \[e = \frac{c+d}{2}= \left(\frac{A\pm B}{2}\right)^2. \]
 Then, by Definition~\ref{defJfq}, $f$ is a square.   

Now, assume that $(a,b)$ is a non-square vertex. Let $\alpha \in \F_q^\times$ be a non-square and consider the graph automorphism $\varphi_\alpha((a,b))=(\alpha a,\alpha b)$. Then
    \begin{align*}
        (a,b) \rightarrow (c,d) \rightarrow (e,f) &&\mapsto&& \varphi_\alpha((a,b)) \rightarrow \varphi_\alpha((c,d)) \rightarrow \varphi_\alpha((e,f)).
    \end{align*}
    Since $\phi_q(\alpha a) = (-1)(-1) = 1$,  $\varphi_\alpha((a,b))$ is a square vertex. We have already proved that $\varphi_\alpha((e,f))$ is a square vertex and as $\varphi_\alpha((e,f)) = (\alpha e, \alpha f)$, we have $\phi_q(e) = -1$, which completes the proof of the lemma.
\end{proof}

Hence, if we have a path between two vertices that are both squares or non-squares, then all vertices are of the same time; otherwise, they will alternate.
\end{proof}
\begin{corollary}\label{corollaryOddCycles}
Let $n$ be an odd positive integer. Then, $M_n$ is even. 
\end{corollary}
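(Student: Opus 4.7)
The plan is to combine the trichotomy in Theorem \ref{theoremSquareandNonsquareVertices} with the pairing supplied by the automorphism $\varphi_\alpha$ of Lemma \ref{lemmagroupautomorphisms} for a non-square $\alpha \in \F_q^\times$. The key idea is that every oriented cycle of odd length must be of a single ``type'' (all square or all non-square), and that $\varphi_\alpha$ swaps these two types.

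First I would show that every oriented cycle of odd length $n$ in $\J_{\F_q}$ consists either entirely of square vertices or entirely of non-square vertices. By Theorem \ref{theoremSquareandNonsquareVertices}, the component containing such a cycle falls into exactly one of three cases. The third case would force a strict alternation of square and non-square vertices across every edge, which is a proper $2$-coloring of the underlying undirected graph of the component. Since any graph containing a cycle of odd length fails to be bipartite, this case is impossible, so only the all-square and all-non-square cases occur for components hosting an odd cycle.

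Next, I would fix any non-square $\alpha \in \F_q^\times$ and use the automorphism $\varphi_\alpha$ from Lemma \ref{lemmagroupautomorphisms}. Because $\phi_q$ is multiplicative with $\phi_q(\alpha) = -1$, the map $(a,b) \mapsto (\alpha a, \alpha b)$ interchanges square and non-square vertices. As a graph automorphism it preserves the length of oriented cycles, so it restricts to a bijection between the set of all-square oriented $n$-cycles and the set of all-non-square oriented $n$-cycles. Combined with the previous step, this writes the set of all oriented $n$-cycles as a disjoint union of two equinumerous pieces, whence $M_n$ is even.

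The only step that demands a moment of care is the first one: making precise that the alternating case of Theorem \ref{theoremSquareandNonsquareVertices} really forbids an odd directed cycle. This is just the familiar non-bipartiteness of odd-cycle graphs applied to the underlying undirected graph of the component, but it is worth stating explicitly, since the alternation in that theorem is a condition across edges and does not depend on their orientation.
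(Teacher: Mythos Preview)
Your argument is correct and coincides with the second of the two proofs the paper gives. The paper's primary proof is even shorter: by Theorem~\ref{theoremNumberOfComponents} one has $(q-1)\mid nM_n$, and since $q-1$ is even while $n$ is odd, $M_n$ is even. The paper then records, as a ``new insight'' afforded by Theorem~\ref{theoremSquareandNonsquareVertices}, exactly the pairing you describe: odd cycles cannot be of alternating type, so they are all-square or all-non-square, and $\varphi_\alpha$ for a non-square $\alpha$ matches the two kinds bijectively. Your write-up makes this second argument more explicit (in particular the bipartiteness obstruction), but you might also mention the one-line deduction from the divisibility statement, which avoids any case analysis.
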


\begin{proof}
It follows directly from Theorem~\ref{theoremNumberOfComponents} as $q-1$ is even. But now we have a new insight after Theorem~\ref{theoremSquareandNonsquareVertices}. Namely, such directed circles cannot have alternating vertices. Hence, all of their vertices have to be squares or non-squares, and there is an equal number of them. 
\end{proof}

\section{AGM over $\F_q$, where ${q \equiv 3 \pmod 4}$.}\label{chap3mod4}

We now recall the results from \cite{griffin2022agm}, which served as an inspiration for our work. In the whole section, we assume $q \equiv 3 \pmod 4$. Then, we know that -1 is a quadratic non-residue in $\F_q$.

\begin{lemma}\label{lemmaEpsilon}
    Let $\varepsilon \in \{\pm 1\}$, $x \in \F_q^\times$. If $\phi_q(x) = 1$, then there is a unique $y \in \F_q^\times$ such that $y^2 = x$ and $\phi_q(y) = \varepsilon$. We will denote $\sqrtf{x} = y$.
\end{lemma}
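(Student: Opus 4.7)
The plan is to use that, for $\phi_q(x)=1$, the polynomial $Y^2-x$ has exactly two roots in $\F_q^\times$, say $y_0$ and $-y_0$, and then to exploit the hypothesis $q\equiv 3\pmod 4$ through the fact that $\phi_q(-1)=-1$.

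First I would recall that since $\phi_q\colon \F_q^\times\to\{\pm1\}$ is a group homomorphism and $q\equiv 3\pmod 4$, we have $\phi_q(-1)=-1$; indeed, $-1$ is a non-residue whenever $q\equiv 3\pmod 4$. Next, given $x\in\F_q^\times$ with $\phi_q(x)=1$, pick any $y_0\in\F_q^\times$ with $y_0^2=x$. The two square roots of $x$ are precisely $y_0$ and $-y_0$, and
\[
\phi_q(-y_0)=\phi_q(-1)\,\phi_q(y_0)=-\phi_q(y_0).
\]
Hence one of $\{y_0,-y_0\}$ has image $+1$ under $\phi_q$ and the other has image $-1$, so for each $\varepsilon\in\{\pm1\}$ there is exactly one $y$ with $y^2=x$ and $\phi_q(y)=\varepsilon$.

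There is no real obstacle here; the whole content is that $\phi_q(-1)=-1$ distinguishes the two square roots in the case $q\equiv 3\pmod 4$, which is exactly why the notation $\sqrtf{x}$ makes sense. I would end by remarking that this uniqueness would fail for $q\equiv 1\pmod 4$, since then $-1$ is itself a square and $y_0,-y_0$ always share the same value of $\phi_q$; this is a hint as to why the case $q\equiv 1\pmod 4$ requires a different approach later in the paper.
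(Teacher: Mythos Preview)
Your argument is correct and matches the paper's proof almost verbatim: pick a square root $t$ of $x$, note that $\phi_q(-t)=-\phi_q(t)$ because $-1$ is a non-residue when $q\equiv 3\pmod 4$, and conclude that exactly one of $t,-t$ has the prescribed sign $\varepsilon$. Your added remark about why the construction breaks down for $q\equiv 1\pmod 4$ is a nice touch but not part of the paper's proof.
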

\begin{proof}
    Let $t\in \F_q^\times$ be so that $t^2=x$. Then $\phi_q(t)=-\phi_q(-t)$, so either $\phi_q(t)=\varepsilon$ or $\phi_q(-t)=\varepsilon$. 
\end{proof}

\begin{definition}\label{defAGMFq}
    Let $a,b \in \F_q^\times$ such that $\phi_q(ab)=1$ and $a \neq \pm b$. We define a sequence $AGM_{\F_q}(a,b) = ((a_n,b_n))_{n=0}^\infty$ such that $a_0 = a$, $b_0 = b$ and 
    \begin{align*}
        a_n &= \frac{a_{n-1}+b_{n-1}}{2}; & \varepsilon &= \phi_q(a_n), & b_n &= \sqrtf{a_{n-1}b_{n-1}}.
    \end{align*}
\end{definition}

\begin{example}
    Let $q=3$. We have $\F_q^\times = \{1,-1\}$ so there is no $(a,b)$ such that $a\neq \pm b$.
\end{example}

For $q\geq 7$,  $AGM_{\F_q}(\cdot,\cdot)$ is an infinite sequence $(a_n,b_n)_{n\in \N} \in {\F_q^\times}^2$, such that $a_n b_n(a_n^2-b_n^2)\neq 0$.

\begin{example}
    Consider $q=11$. Then,
    \begin{align*}
        \agmel(4,1)=(&\overline{(4,1),(8,2),(5,4), (10, 8), (9, 5), (7, 10), (3, 9), (6, 7), (1, 3), (2, 6)})\\
        \\
        \agmel(1,4)=(&(1,4),\overline{(8,2),(5,4), (10, 8), (9, 5), (7, 10), (3, 9), (6,7), (1,3), (2, 6), (4,1)})\\
        \\
        \agmel(9,1)= (&\overline{(9, 1), (5, 3), (4, 9), (1, 5), (3, 4)})\\
        \\
        \agmel(1,9) = (&(1,9),\overline{(5, 3), (4, 9), (1, 5), (3, 4), (9, 1)}),
    \end{align*}
    where the line above a sequence means it is a repeating period.
\end{example}

\begin{example}
    For $q=11$, $\J_{\F_q}$ consists of 3 connected components given above.
\begin{center}
\begin{figure}[!h]
    \begin{center}
        \includegraphics[width = 7.5cm]{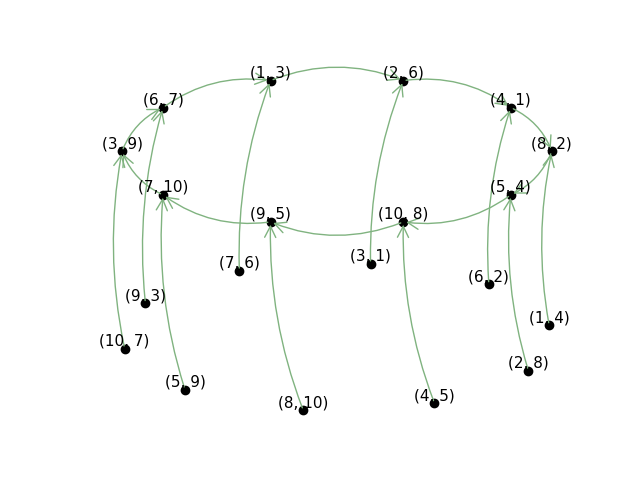}
    \end{center}
    \includegraphics[width = 6cm]{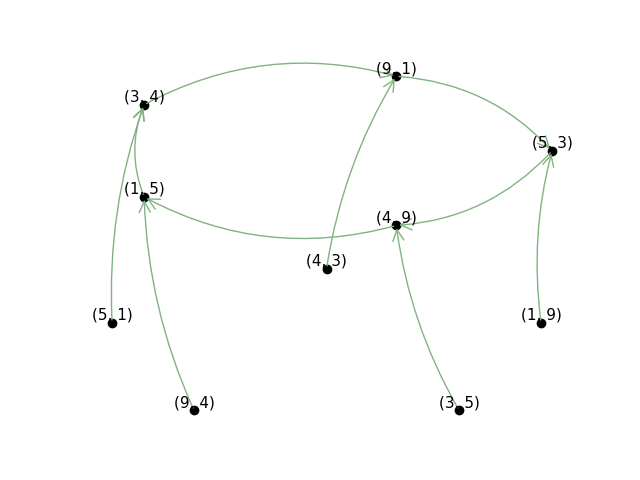}
    \includegraphics[width = 6cm]{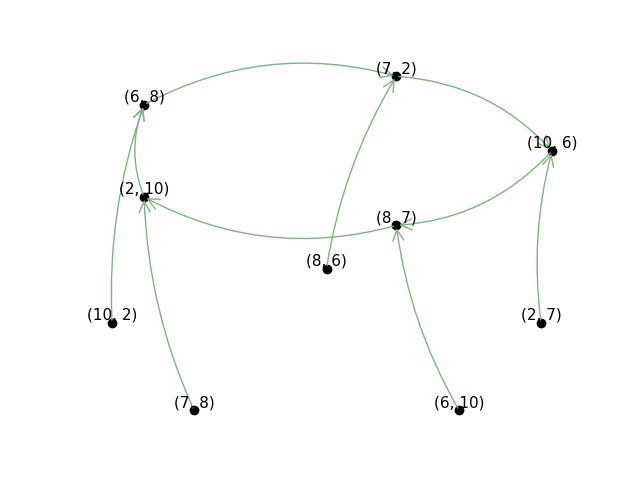}
    \caption{Components of $\J_{\F_{11}}$}
\end{figure}
\end{center}
\end{example}

Now, let us have a look at the graph of $\jf$. We will see that the components have a special shape, which is not a coincidence, as it is proven in \cite[Theorem 1]{griffin2022agm}.

\begin{theorem}\label{theoremShape3mod4}\hfill
    \begin{enumerate}[(1)]
    \item $\J_{\F_q}$ has $(q-1)(q-3)/2$ vertices.
    \item Every component of $\J_{\F_q}$ consists of one directed cycle, and there is a directed path of the length 1 to every vertex of that cycle. There are no other cycles (even undirected).
    \end{enumerate}
\end{theorem}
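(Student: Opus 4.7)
The plan is to prove (1) by direct counting and (2) by exploiting two key structural facts: out-degrees are exactly $1$ (so every component contains a cycle), and the two parents of any parented vertex form a \emph{swap pair} $(a,b), (b,a)$ of which, for $q\equiv 3\pmod 4$, exactly one has further parents of its own.

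For part (1), I would count pairs $(a,b)\in(\F_q^\times)^2$ with $\phi_q(ab)=1$: these are the pairs in which $a,b$ are either both squares or both non-squares, giving $2\bigl(\tfrac{q-1}{2}\bigr)^2=\tfrac{(q-1)^2}{2}$. From these I remove the forbidden $a=\pm b$. Since $\phi_q(-1)=-1$, every pair with $a=-b$ already fails $\phi_q(ab)=1$ and so was not counted, and I only discard the $q-1$ diagonal pairs $a=b$, leaving $\tfrac{(q-1)^2}{2}-(q-1)=\tfrac{(q-1)(q-3)}{2}$.

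For part (2), the first step is that every vertex has out-degree exactly $1$: the two candidate children of $(a,b)$ are $\bigl(\tfrac{a+b}{2},\pm\sqrt{ab}\bigr)$, and because $\phi_q(-1)=-1$ exactly one sign of $d$ satisfies $\phi_q(cd)=1$, while $a\neq\pm b$ forces $c\neq\pm d$. The next and crucial ingredient is the swap-pair observation: by Lemma~\ref{lemParents} the two parents of a parented vertex are $(c+T,c-T)$ and $(c-T,c+T)$, a swap pair; and for any vertex pair $\{(a,b),(b,a)\}$ we have
\[
\phi_q(a^2-b^2)\cdot \phi_q(b^2-a^2)=\phi_q\bigl(-(a^2-b^2)^2\bigr)=\phi_q(-1)=-1,
\]
so exactly one of $(a,b)$, $(b,a)$ has parents. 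This is the step I expect to carry the most weight, and it is precisely where the hypothesis $q\equiv 3\pmod 4$ enters.

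To finish, out-degree $1$ forces every forward orbit to be eventually periodic, so each component contains a directed cycle $v_1\to v_2\to\cdots\to v_n\to v_1$. Each $v_i$ has in-degree at least $1$, hence exactly $2$ by Lemma~\ref{lemParents}, so its other parent must be the swap of $v_{i-1}$. Because $v_{i-1}$ sits on the cycle and therefore has parents, the swap-pair observation forces its swap (the off-cycle parent of $v_i$) to have in-degree $0$. Finally, if any vertex $w$ of the component reached the cycle only after $\ge 2$ steps, the penultimate vertex along the orbit would be an off-cycle parent of a cycle vertex, hence have in-degree $0$, contradicting the incoming edge from the vertex before it. So every component is exactly a directed cycle with one pendant edge on each cycle vertex; in particular no other directed or undirected cycle exists, since each pendant vertex has undirected degree $1$ and cannot belong to any cycle.
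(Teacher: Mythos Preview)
Your argument is correct. Note, however, that the paper itself does not give a proof of this theorem: it simply records the statement and attributes the proof to \cite[Theorem~1]{griffin2022agm}. So there is no ``paper's own proof'' to compare against here; your write-up supplies exactly the elementary details the present paper omits, and in the same spirit as the paper's treatment of the $q\equiv 5\pmod 8$ case (indeed, your swap-pair computation $\phi_q(a^2-b^2)\phi_q(b^2-a^2)=\phi_q(-1)=-1$ is the $q\equiv 3\pmod 4$ analogue of the $\phi_q(i)=-1$ argument in Theorem~\ref{theoremNonemptyEdges} and Theorem~\ref{theoremLongerparents}). One small point worth making explicit for completeness: in part~(2) you should also observe that the unique child $(c,d)$ of $(a,b)$ is genuinely a vertex, i.e.\ that $c\neq 0$; this follows from $a\neq -b$, and together with your check that $c\neq\pm d$ gives out-degree exactly~$1$.
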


When we draw the component of $\jf$ in the plane, it looks like a bell head with tentacles. Hence
the components of $\jf$ are playfully called jellyfishes and the graph $\jf$ is a jellyfish swarm in \cite{griffin2022agm}.

\section{AGM over $\F_q$, where ${q\equiv~5~\pmod 8}$.}\label{chap5mod8} This section contains our new results, which are independent of those in \cite{NewAGM}, even though, not surprisingly, there are overlaps, especially with \cite[Section 4]{NewAGM}. 
In the whole section, let $q \equiv 5 \pmod 8$; then $-1$ is a square in $\F_q$, but 2 and $i$, such that $i^2=-1$ are not.

\begin{lemma}\label{lem:0-2-children}
Let $(a,b)$ be a vertex of $\jf$. Then $(a,b)$ has either $2$ or $0$ children.
\end{lemma}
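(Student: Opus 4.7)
The plan is to identify explicitly the candidate children of $(a,b)$ and then verify the three vertex conditions in Definition~\ref{defJfq}: nonzero coordinates, $\phi_q(cd)=1$, and $c \neq \pm d$. Since $(a,b)$ is a vertex, $\phi_q(ab)=1$, so the equation $d^2 = ab$ has exactly two solutions $\pm d \in \F_q^\times$. Combined with the forced value $c = (a+b)/2$, this yields exactly two candidate children $(c,d)$ and $(c,-d)$, which are distinct since $d \neq 0$.

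The first step is to dispose of the easy conditions. The coordinate $c = (a+b)/2$ is nonzero because $a \neq -b$, and $d$ is nonzero because $ab \neq 0$. The inequality $c \neq \pm d$ is equivalent to $c^2 \neq d^2$, i.e., $((a+b)/2)^2 \neq ab$, which rearranges to $(a-b)^2 \neq 0$ and thus to $a \neq b$; this holds because $a \neq \pm b$.

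The key step is the residue condition. Here the hypothesis $q \equiv 5 \pmod 8$ enters, though all one really needs is $q \equiv 1 \pmod 4$, which yields $\phi_q(-1)=1$. Consequently $\phi_q(-d) = \phi_q(-1)\phi_q(d) = \phi_q(d)$, so $\phi_q(c\cdot(-d)) = \phi_q(cd)$. The point is that the two candidates are tied together by this sign symmetry: either the residue condition is satisfied for both $(c,d)$ and $(c,-d)$ simultaneously, or it fails for both, giving $(a,b)$ either $2$ or $0$ children.

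There is no serious obstacle; the content of the lemma is simply the observation that since $-1$ is a square, flipping the sign of the second coordinate preserves the Legendre-symbol condition. This should be contrasted with the $q \equiv 3 \pmod 4$ case in Lemma~\ref{lemmaEpsilon}, where $\phi_q(-1) = -1$ selects exactly one of $\pm d$ as a valid second coordinate, thereby producing a unique child instead of two.
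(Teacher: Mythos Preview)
Your proof is correct and follows essentially the same approach as the paper's: both identify the two candidate children $(c,\pm d)$ and use $\phi_q(-1)=1$ to conclude that the residue condition $\phi_q(cd)=1$ holds for both or neither. Your version is simply more explicit in verifying the auxiliary vertex conditions ($c,d\neq 0$ and $c\neq\pm d$), which the paper leaves implicit.
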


\begin{proof}
Suppose $(a,b)$ has a child $(A,B)$, so $\phi_q(AB) = 1$. The equation $x^2=ab$ has exactly two solutions, $B$ and $-B$. Then, $\phi_q(A \cdot (-B))=1$ so, $(A,-B)$ is the other child. 
\end{proof}

Recall that $(a,b)$ has either $2$ or $0$ parents from Lemma ~\ref{lemParents}.

\begin{theorem}\label{theoremNonemptyEdges}
    Let $E$ be the set of edges of $\jf$. If $E \neq \emptyset$, then there is a cycle in $\jf$.
\end{theorem}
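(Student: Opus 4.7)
The plan is to construct an infinite directed walk inside the set $V^\ast$ of vertices of $\jf$ that possess both parents and children; since $V^\ast$ is finite, such a walk must revisit a vertex, and this revisit cuts out a directed cycle. Throughout I write $\sigma(a,b):=(a,-b)$; because $\phi_q(-1)=1$ (using $q\equiv 1\pmod 4$) the map $\sigma$ sends $V$ to $V$, and since Lemma~\ref{lemParents} depends on $(a,b)$ only through $a^2-b^2$, the two vertices $v$ and $\sigma(v)$ always share the same parent set.

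I would first establish two computational lemmas. \emph{Every child has parents}: if $(c,d)$ is a child of $(a,b)$ then
\[
c^2 - d^2 = \tfrac{(a+b)^2}{4} - ab = \tfrac{(a-b)^2}{4},
\]
which is a nonzero square (as $a \neq b$), so Lemma~\ref{lemParents} guarantees that $(c,d)$ has parents. The key step is the \emph{sigma-sibling dichotomy}: if $(a,b)$ has parents, then exactly one of $(a,b)$ and $(a,-b)$ has children. One writes the two ``has children'' conditions as $\phi_q(\tfrac{a+b}{2}\,g)=1$ and $\phi_q(\tfrac{a-b}{2}\,g')=1$, where $g^2=ab$ and $(g')^2=-ab$, then multiplies them. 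After observing $(gg')^2=-(ab)^2$, hence $gg'=\pm i\,ab$, the product simplifies to $-\phi_q(a^2-b^2)$; the crucial minus sign comes from $\phi_q(i)=-1$, which holds because $q\equiv 5\pmod 8$ forces $8\nmid q-1$ and hence $\F_q^\times$ contains no $8$th root of unity. When $\phi_q(a^2-b^2)=1$ the product is $-1$, so exactly one of the two factors equals $+1$.

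The proof then proceeds quickly. Given the edge $u\to v\in E$, apply the dichotomy to $v$: exactly one of $v,\sigma(v)$ has children, and since both have the same parents as $v$, this vertex, call it $v_0$, lies in $V^\ast$. Moreover $u\to v_0$ is still an edge, because the two children of $u$ are exactly $v$ and $\sigma(v)$ by Lemma~\ref{lem:0-2-children}. Given $v_n \in V^\ast$, let $w$ be one of its two children: by the first lemma $w$ has parents, so the dichotomy applied to $w$ shows that one of $w,\sigma(w)$ lies in $V^\ast$, and both are children of $v_n$. Taking $v_{n+1}$ to be this good child produces an infinite walk $v_0\to v_1\to v_2\to\cdots$ inside $V^\ast$; finiteness of $V$ forces a repetition $v_j=v_k$, and the segment $v_j\to\cdots\to v_k$ is the desired directed cycle.

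The main obstacle is the dichotomy lemma, which is the only place where the hypothesis $q\equiv 5\pmod 8$ actually intervenes (through $\phi_q(i)=-1$); the remainder is either Lemma~\ref{lemParents}, Lemma~\ref{lem:0-2-children}, or a straightforward symmetry check. Without this lemma both $v$ and $\sigma(v)$ could simultaneously fail to have children, so the forward walk might terminate at the $\sigma$-sibling of a genuine sink, which is precisely why the $q\equiv 1\pmod 8$ case cannot be handled by these elementary means.
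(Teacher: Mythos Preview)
Your proof is correct and follows essentially the same route as the paper's. Both arguments hinge on the identical computation: given a vertex $(A,B)$ that is already a child (so that $A^2-B^2$ is a nonzero square), multiplying the ``has a child'' conditions for $(A,B)$ and $(A,-B)$ produces $\phi_q(i)\cdot\phi_q(A^2-B^2)=-1$, forcing exactly one of the two $\sigma$-siblings to admit a further child; one then walks forward indefinitely and invokes finiteness. Your presentation is a bit more structured—you isolate ``every child has parents'' and the $\sigma$-sibling dichotomy as separate lemmas and name the set $V^\ast$ explicitly—whereas the paper carries out the same computation in one pass with concrete variables $(c,d),(e,f)$, but the substance is the same.
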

\begin{proof}
    Consider the edge $(a,b) \rightarrow (A,B)$ in $\jf$. Then the other child of $(a,b)$ is $(A,-B)$.
    Consider the pairs $(c,d)$ and $(e,f)$ (which do not have to be vertices of $\jf$) such that 
    \begin{align*}
        c &= \frac{A+B}{2} & d^2 &= AB\\
        e &= \frac{A-B}{2} & f^2 &= -AB.
    \end{align*}
    Then $d = \pm if$, where $i \in \F_q$ is such that $i^2=-1$, and we compute 
$$\phi_q(cdef) = \phi_q(ce) \phi_q(\pm if^2) = \phi_q \left(\frac{A^2-B^2}{4} \right) \cdot \phi_q(\pm 1)\phi_q(i)\phi_q(f^2)= \phi_q \left( \left(\frac{a-b}{4}\right)^2 \right) \cdot (-1) = -1.$$

As $cdef$ is a quadratic non-residue, exactly one of $cd$ and $ef$ is a quadratic residue, so exactly one of $(c,d)$, $(e,f)$ is a vertex. So, we can make the path longer and continue analogously for either $(A,B)$ or $(A,-B)$, but not both. As we can continue this way in every step, and the number of vertices is finite,  there must be a cycle in $\jf$. 
\end{proof}

\begin{corollary}\label{cor-two-children-one-in-cycle}
Let $C \subseteq V$ be the set of vertices in a cycle. Then, every $c\in C$ has exactly two children; one is a part of the cycle, and the other does not have any children.
\end{corollary}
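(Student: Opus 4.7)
The plan is to bootstrap directly off the proof of Theorem~\ref{theoremNonemptyEdges}, which already contains the essential combinatorial content. Recall from the proof of Lemma~\ref{lem:0-2-children} that whenever $(a,b) \to (A,B)$ is an edge, the two children of $(a,b)$ are exactly $(A,B)$ and $(A,-B)$. Moreover, the computation $\phi_q(cdef)=-1$ in the proof of Theorem~\ref{theoremNonemptyEdges} shows that exactly one of the two ``next-generation'' pairs $(c,d)$ and $(e,f)$ is a legitimate vertex of $\jf$, which (combined with Lemma~\ref{lem:0-2-children}) says that exactly one of the two siblings $(A,B)$ and $(A,-B)$ admits any children at all.

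Given this, the corollary reduces to a very short bookkeeping argument. First I would note that any $c\in C$ has at least one child, namely its successor $c_1$ along the cycle; hence by Lemma~\ref{lem:0-2-children} it has exactly two children, which I will call $c_1$ and $c_2$. Then I would apply the dichotomy above to the cycle edge $c \to c_1$: the two children of $c$ are precisely $c_1$ and $c_2$, so exactly one of them has children. Since $c_1$ lies on the cycle it has a successor and therefore does have children, which forces $c_2$ to have no children.

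As a small free byproduct the same argument also confirms $c_2 \notin C$, because every vertex of a cycle has a successor and hence at least one child. I do not anticipate any genuine obstacle here: the only subtle point is recognising that the key fact---\emph{exactly one of a pair of sibling vertices in $\jf$ is extendable forward}---has already been established inside the proof of Theorem~\ref{theoremNonemptyEdges} and can be invoked verbatim, so no new calculation with $\phi_q$ is required.
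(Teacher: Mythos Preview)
Your proposal is correct and follows essentially the same argument as the paper's own proof: both use that a cycle vertex has a successor (hence two children by Lemma~\ref{lem:0-2-children}) and then invoke the ``exactly one sibling is extendable'' dichotomy established in the proof of Theorem~\ref{theoremNonemptyEdges} to force the non-cycle child to be childless. Your write-up is in fact a bit more explicit about the byproduct $c_2\notin C$, but the route is the same.
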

\begin{proof}
Suppose $c_1 \in C$ has exactly two children, $c_2$ and $d_2$. Suppose $c_2\in C$, so it has exactly two children, $c_3$ and $c_3'$. Hence, by the proof of  Theorem~\ref{theoremNonemptyEdges}, then $d_2$ cannot have any children.
\end{proof}

\begin{theorem}\label{theoremMorethan2}
     Let $C$ be the cycle in $\jf$, then there are more than two vertices in $C$.
\end{theorem}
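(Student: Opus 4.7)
The plan is to rule out $1$-cycles and $2$-cycles by direct calculation on the defining AGM relations; every remaining cycle then automatically has length at least three.

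First, a self-loop $(a,b)\to(a,b)$ is immediately impossible: the arithmetic-mean condition $a=(a+b)/2$ forces $a=b$, contradicting the vertex condition $a\neq \pm b$ built into Definition~\ref{defJfq}.

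For a hypothetical $2$-cycle $(a,b)\to(c,d)\to(a,b)$, I would write the four AGM-step relations, namely two linear equations ($2c=a+b$ and $2a=c+d$) and two quadratic ones ($d^2=ab$ and $b^2=cd$). Using the linear equations to eliminate $b=2c-a$ and $d=2a-c$, the quadratic conditions become a symmetric pair in $a$ and $c$ (one is obtained from the other by swapping), and adding them should collapse to the clean identity $6(a-c)^2=0$.

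The only step that requires actual thought is ensuring that $6$ is invertible in $\F_q$. Since $q$ is odd, $\mathrm{char}(\F_q)\neq 2$; and since the powers of $3$ modulo $8$ cycle only through $3$ and $1$, no $q\equiv 5\pmod 8$ can have characteristic $3$. Hence $6\neq 0$, which forces $a=c$ and then $b=2c-a=a$, again contradicting $a\neq \pm b$. I expect this characteristic check to be the sole genuine obstacle; the rest is routine manipulation of the defining relations and does not use any special structure of $q\pmod 8$ beyond ruling out small characteristic.
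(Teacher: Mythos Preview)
Your argument is correct. The self-loop case is immediate as you say, and for the $2$-cycle your elimination is sound: substituting $b=2c-a$ and $d=2a-c$ into $d^2=ab$ and $b^2=cd$ yields $5a^2-6ac+c^2=0$ and $a^2-6ac+5c^2=0$, whose sum is indeed $6(a-c)^2=0$. Your observation that powers of $3$ are only $1$ or $3$ modulo $8$ correctly rules out characteristic $3$, so $6$ is a unit and the conclusion follows.

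The paper takes a slightly different and somewhat slicker route. Rather than eliminating and adding, it combines the two arithmetic-mean relations to get $a+c=b+d$, and multiplies the two geometric-mean relations $d^2=ab$, $b^2=cd$ (using $b,d\in\F_q^\times$) to get $ac=bd$. Vieta's formulas then force $\{a,c\}=\{b,d\}$, and either branch gives $a=b$. The advantage of the paper's approach is that it works uniformly in any odd characteristic without a separate characteristic-$3$ check; the advantage of yours is that it is entirely mechanical and avoids the (small) cleverness of spotting the Vieta structure. Both are equally valid for the setting $q\equiv 5\pmod 8$.
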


\begin{proof}
     Suppose that there are edges $(a,b) \rightarrow (c,d)$ and $(c,d) \rightarrow (a,b)$ in $\jf$. 
    Then,
$$ c = \frac{a+b}{2} \;\; \text{and} \;\; a= \frac{c+d}{2} \;\; \text{imply} \;\; a+c = b+d.$$
    
Also, $d^2=ab$ and $b^2=cd$ implies $ac = bd$.
    
By Vieta's formulas,  $(x-a)(x-c) = (x-b)(x-d)\in \F_q[x]$, so $\{a,b\}=\{c,d\}$.

    Recall that by Definition~\ref{defJfq}, we have $a\neq b$, hence, suppose $a=d, b = c$.
    So, we have an edge $(a,b) \rightarrow (b,a)$ which gives us $b = \frac{a+b}{2}$, thus $b = a$, a contradiction.
\end{proof}

\begin{theorem}\label{theoremLongerparents}
    Consider the path $(a,b) \rightarrow (c,d) \rightarrow (e,f)$ in $\jf$. Then, there is another path $(g,h) \rightarrow (d,c) \rightarrow (e,f)$ and exactly one of the vertices $(a,b),(g,h)$ has parents.
\end{theorem}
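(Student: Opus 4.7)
The plan is to produce $(g,h)$ explicitly as a parent of $(d,c)$, and then use the formulas from Lemma~\ref{lemParents} to compare $\phi_q(a^2-b^2)$ with $\phi_q(g^2-h^2)$, which are the quantities that decide whether $(a,b)$ and $(g,h)$ themselves have parents.

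First I would observe that $(d,c)$ is a vertex of $\jf$ (the defining conditions $\phi_q(dc)=1$ and $d\neq\pm c$ are symmetric in $c,d$) and that its two children coincide with those of $(c,d)$, since both $(c+d)/2$ and $y^2=cd=dc$ are symmetric. In particular $(d,c)\to(e,f)$ is an edge, and because $c\neq d$ the path through $(d,c)$ really is different from the one through $(c,d)$. To produce $(g,h)$, I apply Lemma~\ref{lemParents} to $(d,c)$: since $(a,b)$ is a parent of $(c,d)$, that lemma gives $\phi_q(c^2-d^2)=1$, and $q\equiv5\pmod 8$ forces $\phi_q(-1)=1$, hence $\phi_q(d^2-c^2)=1$ as well. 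So $(d,c)$ has parents, and I let $(g,h)$ be either of them.

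Next I write the parents explicitly via Lemma~\ref{lemParents}: $(a,b)=(c+S,c-S)$ with $S^2=c^2-d^2$, and $(g,h)=(d+T,d-T)$ with $T^2=d^2-c^2=-S^2$, so I may take $T=iS$ where $i\in\F_q$ satisfies $i^2=-1$. A direct calculation gives
\[
a^2-b^2=(c+S)^2-(c-S)^2=4cS,\qquad g^2-h^2=(d+T)^2-(d-T)^2=4diS.
\]
Applying Lemma~\ref{lemParents} again, $(a,b)$ has parents iff $\phi_q(cS)=1$ and $(g,h)$ has parents iff $\phi_q(diS)=1$; the factor $4$, together with any sign change coming from the two choices of parent, is a square and so does not affect the Legendre symbol.

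Finally, I multiply these two conditions:
\[
\phi_q(cS)\cdot\phi_q(diS)=\phi_q(cd)\,\phi_q(i)\,\phi_q(S^2)=1\cdot(-1)\cdot 1=-1,
\]
using $\phi_q(cd)=1$ since $(c,d)$ is a vertex, and crucially $\phi_q(i)=-1$ because $q\equiv 5\pmod 8$ (so $-1$ is a square but its square roots are not). Hence exactly one of $\phi_q(cS),\phi_q(diS)$ equals $+1$, which is the claim. The only obstacle, and the only place the hypothesis $q\equiv 5\pmod 8$ is needed, is the equality $\phi_q(i)=-1$; this single asymmetry is what forces exactly one of $(a,b),(g,h)$ to carry parents.
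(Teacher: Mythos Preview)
Your proof is correct and follows essentially the same approach as the paper: both write the parents of $(c,d)$ and $(d,c)$ explicitly via Lemma~\ref{lemParents} (your $S,T=iS$ are the paper's $x,y=\pm ix$), compute $a^2-b^2=4cS$ and $g^2-h^2=4dT$, and use $\phi_q(cd)=1$ together with $\phi_q(i)=-1$ to get $\phi_q((a^2-b^2)(g^2-h^2))=-1$. Your version is slightly more thorough in verifying that $(d,c)\to(e,f)$ is actually an edge, which the paper leaves implicit.
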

\begin{proof}
    By Lemma~\ref{lemParents}, we have $\phi_q(c^2-d^2)=1$, so $\phi_q(d^2-c^2)=1$. Then, by Lemma~\ref{lemParents}, there exist parents $(g,h)$ of $(d,c)$. Also, by Lemma~\ref{lemParents}, we know that there are $x,y\in \F_q^\times$ such that
     $a= c-x$, $b = c+x$, and $x^2= c^2-d^2$, and 
     $g= d-y$, $h = d+y$, and $y^2 = d^2 - c^2$.
    We can compute 
    \begin{align*}
        \phi_q((a^2-b^2)(g^2-h^2)) &= \phi_q\left(\left((c-x)^2-(c+x)^2\right)\left((d-y)^2-(d+y)^2\right)\right)\\
        &=\phi_q(16\cdot cx \cdot dy) = \phi_q(16) \phi_q(cd) \phi_q(xy) = \phi_q(xy)= -1,
    \end{align*}
because $y=\pm ix$. Thus, either $(a^2-b^2)$ or $(g^2-h^2)$ is a quadratic residue. By Lemma~\ref{lemParents}, either $(a,b)$ or $(g,h)$ has parents, and the other does not.
\end{proof}

\begin{corollary}\label{cor-two-parents-one-in-cycle}
    Let $C \subseteq V$ be the set of vertices in a cycle. Then every $c\in C$ has exactly 2 parents; one is part of the cycle $C$, and the other is not part of any cycle.
\end{corollary}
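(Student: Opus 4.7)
The plan is to fix an arbitrary vertex $(e,f) \in C$ and analyse both of its parents. Since $C$ is a directed cycle, $(e,f)$ has at least one parent lying in $C$; call it $(c,d)$. By Lemma~\ref{lemParents}, $(e,f)$ therefore has exactly two parents, and the second is the coordinate swap $(d,c)$. This already establishes the first two assertions of the corollary, and the substantive content is that $(d,c)$ lies on no cycle of $\jf$.

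To prove this, I would apply Theorem~\ref{theoremLongerparents}. The vertex $(c,d)$ itself lies in $C$, so it has a parent $(a,b)\in C$, producing a length-two path $(a,b)\to(c,d)\to(e,f)$ contained in $C$. The theorem then supplies a parallel path $(g,h)\to(d,c)\to(e,f)$ and asserts that exactly one of $(a,b),(g,h)$ has parents. But $(a,b)\in C$ certainly has a parent (its predecessor in $C$), so $(g,h)$ has no parent.

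It remains to rule out the \emph{other} parent of $(d,c)$ being in a cycle. By Lemma~\ref{lemParents}, if $(d,c)$ has parents at all, they are the pair $(g,h)$ and its swap $(h,g)$. The existence of parents for $(g,h)$ is equivalent to $\phi_q(g^2-h^2)=1$, and for $(h,g)$ to $\phi_q(h^2-g^2)=1$; because $q\equiv 5\pmod 8$ forces $-1$ to be a square in $\F_q$, these two conditions coincide. Hence $(h,g)$ has no parent either. If now $(d,c)$ were on some cycle $C'$, its predecessor in $C'$ would be one of $(g,h),(h,g)$ and would itself need a parent (its own predecessor in $C'$), contradicting what was just shown.

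The main obstacle — really the only subtle point — is this last extension from one to both parents of $(d,c)$: Theorem~\ref{theoremLongerparents} only directly excludes $(g,h)$, and one needs the square-ness of $-1$ in $\F_q$ (the characteristic feature of the case $q\equiv 5\pmod 8$ at play throughout this section) to exclude $(h,g)$ as well.
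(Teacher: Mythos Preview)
Your proof is correct and follows essentially the same route as the paper: apply Theorem~\ref{theoremLongerparents} to a length-two path inside $C$ ending at the given vertex, deduce that the parent $(g,h)$ of the off-cycle parent $(d,c)$ has no parents, and conclude that $(d,c)$ cannot lie on a cycle. If anything, your argument is more careful than the paper's on the point you flag as subtle: you explicitly treat \emph{both} parents $(g,h)$ and $(h,g)$ of $(d,c)$, using $\phi_q(-1)=1$ to transfer the no-parent conclusion from one to the other, whereas the paper names only a single parent $c_1'$ and leaves this step implicit.
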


\begin{proof}
Suppose $c \in C$ and recall by Theorem~\ref{theoremMorethan2} that the length of a cycle is at least $3$, so we can consider the path $c_1 \rightarrow c_2 \rightarrow c$ of vertices from the cycle.
As $c$ has another parent ${c_2}'$, by Theorem~\ref{theoremLongerparents} we know there is another path ${c_1}' \rightarrow {c_2}' \rightarrow c$, hence exactly one of the vertices $c_1$ and $c_1'$ has parents by Theorem~\ref{theoremLongerparents}. As $c_1 \in C$, it has parents, hence $c_1'$ does not have parents.
\end{proof}

When we join all the previous claims about $\jf$ together, we obtain the following theorem. It is useful to look at Figure 2 for the proof of Theorem~\ref{theoremGraph5mod8}.

\begin{theorem}\label{theoremGraph5mod8}
    Every component of $\jf$ is made either of single vertex or of the cycle $c_1 \rightarrow c_2 \rightarrow \dots \rightarrow c_n$, vertices $u_i$ such that $c_i \rightarrow u_{i+1}$, vertices $v_i$ such that $v_i \rightarrow c_i$ and $v_i \rightarrow u_i$ and vertices $w_i$ such that $w_i \rightarrow v_i$, $w_{n+i} \rightarrow v_i$ and vertices $x_i$ such that $w_i \rightarrow x_i$, $w_{n+i} \rightarrow x_i$. These are all edges and vertices of a nontrivial component of $\jf$.
\end{theorem}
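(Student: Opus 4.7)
The plan is to build the structure of a nontrivial component outward from a cycle, layer by layer, using that every vertex of $\jf$ has $0$ or $2$ parents and $0$ or $2$ children together with the swap symmetry afforded by $\phi_q(-1)=1$ when $q \equiv 5 \pmod 8$.

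First I would apply Theorem~\ref{theoremNonemptyEdges} and Theorem~\ref{theoremMorethan2} to fix a cycle $c_1 \to c_2 \to \cdots \to c_n \to c_1$ with $n \geq 3$ inside the component. Corollary~\ref{cor-two-children-one-in-cycle} then supplies, for each $i$, a unique non-cycle child $u_{i+1}$ of $c_i$ with no further children, and Corollary~\ref{cor-two-parents-one-in-cycle} supplies a unique non-cycle parent $v_i$ of $c_i$. Writing $c_{i+1}=(A,B)$, the proof of Lemma~\ref{lem:0-2-children} identifies $u_{i+1}=(A,-B)$, so the parents-criterion $\phi_q(A^2-B^2)=1$ of Lemma~\ref{lemParents} is literally the same for $c_{i+1}$ and $u_{i+1}$; the explicit parents-formula in Lemma~\ref{lemParents} then forces them to share the pair of parents $\{c_i,v_{i+1}\}$, producing the edges $v_{i+1}\to u_{i+1}$ the theorem demands.

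The heart of the argument will be locating the parents of $v_i$. A direct Lemma~\ref{lemParents} computation identifies $v_i$ with the swap of $c_{i-1}$, and since $\phi_q(-1)=1$ the same lemma gives $v_i$ two parents which are themselves swaps of each other, to be labelled $w_i,w_{n+i}$. To show neither has parents I would apply Theorem~\ref{theoremLongerparents} to the path $c_{i-2}\to c_{i-1}\to c_i$: because $v_i$ is the swap of $c_{i-1}$, the alternate path is $(g,h)\to v_i\to c_i$, and exactly one of $c_{i-2},(g,h)$ has parents. Since $c_{i-2}$ does, $(g,h)$ does not; setting $w_i=(g,h)$, the swap identity $\phi_q(h^2-g^2)=\phi_q(-1)\phi_q(g^2-h^2)$ then forces $w_{n+i}=(h,g)$ to have no parents either.

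It remains to construct the $x_i$. Each of $w_i,w_{n+i}$ has $v_i$ as a child and hence two children by Lemma~\ref{lem:0-2-children}; since the two $w$'s are swaps of each other, the description in the proof of Lemma~\ref{lem:0-2-children} gives them the \emph{same} pair of children, namely $v_i$ and a common second vertex I would name $x_i$. The observation inside the proof of Theorem~\ref{theoremNonemptyEdges}, that of two sibling children of a vertex exactly one has further children, applied to $\{v_i,x_i\}$ yields that $x_i$ has no children, since $v_i$ has. At that point every listed vertex carries its full prescribed set of $0$ or $2$ parents and $0$ or $2$ children inside the listed set, so the construction is closed under the edge relation and, being connected, exhausts the component. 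The principal obstacle is the third paragraph: showing that \emph{both} parents of $v_i$ lack parents uses the hypothesis $q\equiv 5\pmod 8$ essentially, twice, through $\phi_q(-1)=1$.
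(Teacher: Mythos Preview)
Your proof is correct and follows essentially the same route as the paper's: start from the cycle, attach the $u_i$ via Corollary~\ref{cor-two-children-one-in-cycle}, identify $v_i$ as the swap of $c_{i-1}$ via Lemma~\ref{lemParents}, use $\phi_q(-1)=1$ to give $v_i$ parents $w_i,w_{n+i}$, apply Theorem~\ref{theoremLongerparents} against the cycle path to strip parents from the $w$'s, and read off $x_i$ as the sibling of $v_i$. Your closure argument at the end (checking that every listed vertex has its full quota of $0$ or $2$ parents and children already inside the list) is a welcome addition that the paper leaves implicit. One small correction to your closing commentary: the condition $\phi_q(-1)=1$ only uses $q\equiv 1\pmod 4$; the genuinely $q\equiv 5\pmod 8$ content enters through the theorems you invoke (Theorems~\ref{theoremNonemptyEdges} and~\ref{theoremLongerparents}), where $\phi_q(i)=-1$ is what does the work.
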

\vspace*{-1.1cm}
\begin{center}
\begin{figure}[!h]
    \begin{center}
        \includegraphics[width = 11.3cm]{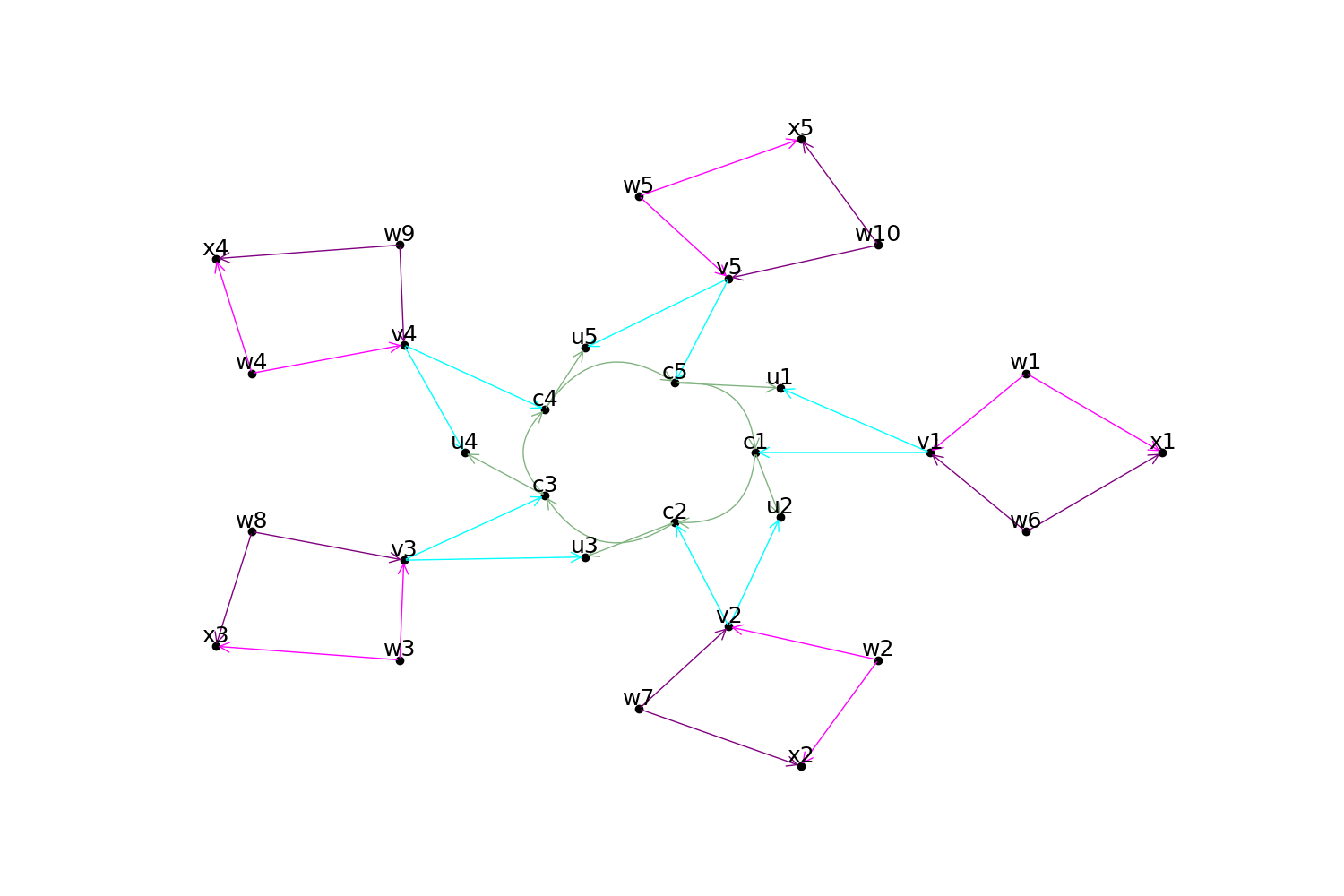}
    \end{center}
    \caption{A general nontrivial component of $\jf$ such that $q \equiv 5 \pmod 8$}
\end{figure}
\end{center}

\begin{proof}
Assume there is a cycle $C$ of length $n$ (otherwise, by Theorem~\ref{theoremNonemptyEdges}, only vertices are the components). Consider the path $c_n=(a_n,b_n)\rightarrow c_1=(a_1,b_1)\rightarrow c_2=(a_2,b_2)$. By Corollary~\ref{cor-two-children-one-in-cycle}, $c_1$ has two children, one is $c_2=(a_2,b_2)$ in the cycle, and the other one is $u_2=(a_2,-b_2)$, which is not in the cycle and has no children. We also know that there is another parent of $c_2$ and $u_2$, which is $v_2=(b_1,a_1)$. Since $c_1=(a_1,b_1)$ has parents because $c_1\in C$, this implies that $\phi_q(a_1^2-b_1^2)=1$, and so $\phi_q(b_1^2-a_1^2)=1$, thus $v_2=(b_1,a_1)$ also has parents by Lemma~\ref{lemParents}. Let these parents be $w_2=(e_1,f_1)$ and $w_{2+n}=(f_1,e_1)$, and they have another child $x_2=(b_1,-a_1)$. Since $v_2$ has children, we know that $x_2$ has no children. Now, we apply Theorem~\ref{theoremLongerparents} to the paths $c_n\rightarrow c_1\rightarrow c_2$ and $w_2\rightarrow v_2\rightarrow c_2$ (or $w_{n+2}\rightarrow v_2\rightarrow c_2$) to conclude that $w_2$ and $w_{n+2}$ have no parents because $c_n\in C$, so it has parents. This completes the proof and explains the components containing a cycle in a bit more detail.
\end{proof}

\begin{definition}\label{def:AGMq-5-mod8}
    Let $q \equiv 5 \pmod 8$  and $(a,b) \in {\F_q^\times}^2$ such that $ab(a^2-b^2)\neq 0$, $\phi_q(ab) = 1$ and there exist $c,d \in \F_q^\times$ such that $cd(c^2-d^2)\neq 0$ and
    \begin{align*}
        c &= \frac{a+b}{2}, & d^2&=ab, & \phi_q(cd) = 1.  
    \end{align*}
    Let $C$ be the component of $\jf$, which contains the vertex $(a,b)$, and suppose it contains a cycle of the length $n$. Then, using the notation from Theorem~\ref{theoremGraph5mod8}, without loss of generality, we may assume $(a,b) \in \{c_1, v_1, w_1\}$.
    We define an infinite arithmetic-geometric mean sequence as follows:
    \[\agmf(a,b) =
    \begin{cases}
        (\overline{c_1, c_2, \dots, c_n}) & \text{if } (a,b) = c_1,\\
        (v_1,\overline{c_1, c_2, \dots, c_n}) & \text{if } (a,b) = v_1,\\
        (w_1, v_1,\overline{c_1, c_2, \dots, c_n}) & \text{if } (a,b) = w_1.
    \end{cases}
    \]
\end{definition}
Recall that we denote the repeating period by line over elements of the sequence. 

\begin{theorem}\label{thm-nontrivial-graph-29}
    If $q\geq 29$ and $q \equiv 5 \pmod 8$, then there is at least one nontrivial component.
\end{theorem}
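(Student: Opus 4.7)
The plan is to exhibit a single edge in $\jf$; combined with Theorem~\ref{theoremNonemptyEdges}, this forces a cycle and hence a nontrivial component. Using the scaling automorphisms of Lemma~\ref{lemmagroupautomorphisms}, I may assume the edge has source $(a,1)$; such a vertex exists precisely when $a$ is a nonzero square different from $\pm 1$, so I write $a = t^2$ with $t \in \F_q \setminus \{0,\pm 1,\pm i\}$ (here $i^2=-1$, which exists because $q \equiv 1 \pmod 4$). The candidate child is then $(c,d) = ((t^2+1)/2,\,t)$, and by the choice of $t$ all conditions for $(c,d)$ to be a vertex are automatic except $\phi_q(cd)=1$. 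Since $\phi_q(2)=-1$ for $q \equiv 5 \pmod 8$, this single condition reduces to
\[
\phi_q\bigl(t(t^2+1)\bigr) = -1.
\]

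To produce such a $t$, I count via the elliptic curve $E\colon y^2 = x^3+x$, which is smooth over $\F_q$ since $q$ is odd. Let $N_+$, $N_-$, $N_0$ denote the number of $t \in \F_q$ for which $t(t^2+1)$ is a nonzero square, a non-square, or zero, respectively. Because $-1$ is a square, $N_0 = |\{0,i,-i\}| = 3$. Counting affine $\F_q$-points of $E$ yields $\#E(\F_q) = 2N_+ + N_0 + 1 = 2N_+ + 4$, so Hasse's bound $\#E(\F_q) \leq q+1+2\sqrt{q}$ gives
\[
N_- = q - 3 - N_+ \;\geq\; \tfrac{1}{2}\bigl(q-3-2\sqrt{q}\bigr).
\]

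Of the elements counted by $N_-$, I must still discard $t=\pm 1$ (which do lie in $N_-$, as $t(t^2+1)=\pm 2$ is a non-square). Hence the number of \emph{valid} $t$ is at least $N_- - 2 \geq \tfrac{1}{2}(q - 7 - 2\sqrt{q})$, which is strictly positive for every $q \geq 29$ (already $29 - 2\sqrt{29} > 18 > 7$). This produces the required edge.

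The main---and only not entirely elementary---step is Hasse's bound on $\#E(\F_q)$; an equivalent formulation would use the character-sum estimate $\bigl|\sum_{t \in \F_q}\phi_q(t^3+t)\bigr| \leq 2\sqrt{q}$. Some such bound is genuinely necessary: for $q = 13$ one checks by hand that $t(t^2+1)$ is a nonzero square for every $t \in \F_{13}^\times \setminus \{\pm 1, \pm i\}$, so $\J_{\F_{13}}$ has no edges at all, and $q = 29$ is the sharp threshold (there being no admissible prime powers in the range $14 \leq q \leq 28$).
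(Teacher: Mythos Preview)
Your argument is correct: reducing to edges with source $(t^2,1)$, translating the child condition into $\phi_q(t^3+t)=-1$, and bounding the number of such $t$ via Hasse's theorem for $E:y^2=x^3+x$ all work, and the arithmetic for $q\geq 29$ checks out.

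However, your route is genuinely different from the paper's, and in a way that matters for this paper's stated aims. The paper's proof is entirely elementary: fixing a generator $g$ of $\F_q^\times$, it examines the three candidate vertices $(g^2,1),(g^4,1),(g^6,1)$; if none of $g^4-1,\,g^8-1,\,g^{12}-1$ is a square, then factoring $g^8-1=(g^4-1)(g^4+1)$ and $g^{12}-1=(g^4-1)(g^8+g^4+1)$ forces $g^4+1=a^2$ and $g^8+g^4+1=b^2$ for some $a,b$, whence $a^4-g^4=b^2$ and $(a^2,g^2)$ is a vertex with a parent. No point-counting or character-sum bound is invoked. Your approach, by contrast, rests on Hasse's bound (equivalently the Weil estimate for $\sum_t\phi_q(t^3+t)$), which is exactly the kind of input the paper is trying to avoid---indeed, the introduction explicitly contrasts the elementary method here with the elliptic-curve approach of \cite{NewAGM}, and \cref{sec-challenge} poses as an open problem whether the $q\equiv 1\pmod 8$ case can be handled elementarily. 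What your proof buys is a clean quantitative lower bound on the number of edges (roughly $\tfrac12(q-2\sqrt q)$), which the paper's ad hoc trick does not give; what the paper's proof buys is that it stays within the elementary framework that is the paper's whole point.
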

\begin{proof}
   We prove that there is a vertex $(a,b)$ in $\jf$ which has parents, i.e., with $\phi_q(a^2-b^2)=1$ by Lemma~\ref{lemParents}. Let $g$ be a generator of the group $\F_q^\times$. As $q\geq 29$, we have $g^2, g^4, g^6 \notin \{1,-1\}$, so we can consider vertices $(g^2,1)$, $(g^4,1)$, $(g^6,1)$. If any of 
    \begin{align*}
        (g^2)^2-1^2&=g^4-1, & (g^4)^2-1^2 &= g^8-1, & (g^6)^2-1^2 &= g^{12}-1
    \end{align*}
    is a square, and then we find a vertex with its parents. Suppose none of them is a square, then 
    \begin{align*}
        -1 &= \phi_q(g^8-1) = \phi_q((g^4-1)(g^4+1))
        = - \phi(g^4+1), \;\text{therefore},\; \phi(g^4+1)=1;\\
        -1 &= \phi_q(g^{12}-1) = \phi_q((g^4-1)(g^8+g^4+1)) =  - \phi_q(g^8+g^4+1), \;\text{therefore},\; \phi_q(g^8+g^4+1)=1.
    \end{align*}
    So, there are $a,b\in \F_q^{\times}$ such that $a^2 = g^4+1 \neq 0$ and $b^2 = g^8+g^4+1 \neq 0$. Then
\[a^4-b^2 = g^8+2g^4+1-(g^8+g^4+1) = g^4,\;\text{so},\; a^4-g^4 = b^2.\]
    We have $\phi_q(a^2g^2) = 1$ and $a^2g^2(a^4-g^4)\neq 0$, so, $(a^2,g^2)$ is a vertex of $\jf$ which has parents.
\end{proof}

We note that \cite[Theorem 3(1)]{NewAGM} is much stronger than this result for sufficiently large $q$, but we have the best possible bound for $q$ such that there is at least one nontrivial component in $\jf$. We can see that for smaller $q$, there are no nontrivial components. Further, we can be more precise than in Theorem~\ref{thm-nontrivial-graph-29}, and show that there is a nontrivial component consisting only of square vertices.

\begin{theorem}[Main theorem in \cite{sumofpowers}]\label{theoremksum}
Let $k$ be a positive integer, and $q>1$ be a prime power. Let $\delta = \gcd(q-1,k)$. Assume $q>(\delta -1)^4$. Then every element of $\F_q$ is a sum of two $k$th powers. 
\end{theorem}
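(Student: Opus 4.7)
The plan is to use the standard character-sum method for diagonal equations over a finite field, with the Weil bound on Jacobi sums doing the heavy lifting. Fix a multiplicative character $\chi$ of $\F_q^\times$ of exact order $\delta = \gcd(q-1,k)$. For $b \in \F_q^\times$, the number of $x \in \F_q^\times$ satisfying $x^k = b$ equals $\sum_{j=0}^{\delta-1} \chi^j(b)$. Hence, for any $a \in \F_q$, the number $N(a)$ of pairs $(x,y) \in \F_q^2$ with $x^k + y^k = a$ can, after separating the boundary contributions from $x=0$ or $y=0$, be written as
\[
N(a) \;=\; q - \varepsilon(a) + \sum_{(i,j)\neq(0,0)} S_{i,j}(a),
\]
where each $S_{i,j}(a) = \sum_{b+c=a,\, bc\neq 0}\chi^i(b)\chi^j(c)$ and $\varepsilon(a)$ is a small correction depending on whether $a=0$ and whether $a$ itself is a $k$-th power.

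For $a \neq 0$, the substitution $b = at$, $c = a(1-t)$ gives $S_{i,j}(a) = \chi^{i+j}(a)\,J(\chi^i,\chi^j)$, where $J$ is the classical Jacobi sum. Whenever $\chi^i$, $\chi^j$, and $\chi^{i+j}$ are all nontrivial, the identity $J(\chi^i,\chi^j) = g(\chi^i)g(\chi^j)/g(\chi^{i+j})$ combined with $|g(\chi)|=\sqrt{q}$ yields $|J(\chi^i,\chi^j)|=\sqrt{q}$; the $O(\delta)$ remaining pairs with $\chi^{i+j}$ trivial contribute only $O(1)$ each. Among the $(\delta-1)^2$ pairs with $i,j$ both nonzero, at most that many sums are of size $\sqrt{q}$, so
\[
N(a) \;\geq\; q - O(\delta) - (\delta-1)^2\sqrt{q},
\]
which is strictly positive, hence $N(a)\geq 1$, whenever $q>(\delta-1)^4$. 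The case $a=0$ is analogous and easier, since $S_{i,j}(0)$ then collapses to a one-variable Gauss-type sum.

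The main obstacle is the careful bookkeeping of the degenerate contributions --- the $x=0$ or $y=0$ boundary and the pairs $(i,j)$ with $\chi^{i+j}$ trivial, which contribute Jacobi sums equal to $-\chi^i(-1)$ rather than of size $\sqrt{q}$ --- precisely enough that the final threshold comes out as the sharp $q>(\delta-1)^4$ rather than a weaker $q\gg \delta^4$. Once this accounting is tight, the Weil bound on Jacobi sums delivers the conclusion, so that the only serious work is in the combinatorial tracking, not in any new analytic input.
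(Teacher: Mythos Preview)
The paper does not prove this statement: it is quoted as the main theorem of Small~\cite{sumofpowers} and used as a black box in the proof of Corollary~\ref{cor-all-squares-component}. There is thus no proof in the present paper to compare your argument against.

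Your plan is the standard character-sum route and is correct. One remark on the bookkeeping you flag as the main obstacle: it comes out more cleanly than your sketch suggests. For $a\neq 0$ the boundary contributions from $x=0$ or $y=0$ cancel exactly against the cross terms with exactly one of $i,j$ equal to zero, leaving the clean identity
\[
N(a)=q+\sum_{i=1}^{\delta-1}\sum_{j=1}^{\delta-1}\chi^{i+j}(a)\,J(\chi^i,\chi^j).
\]
Among these $(\delta-1)^2$ Jacobi sums, the $\delta-1$ pairs with $i+j\equiv 0\pmod\delta$ have modulus $1$, and only the remaining $(\delta-1)(\delta-2)$ have modulus $\sqrt q$. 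The crucial point for the sharp threshold is that the coefficient of $\sqrt q$ is $(\delta-1)(\delta-2)$, not $(\delta-1)^2$: from $q>(\delta-1)^4$ one gets $q>(\delta-1)^2\sqrt q$, hence
\[
N(a)\;\geq\; q-(\delta-1)(\delta-2)\sqrt q-(\delta-1)\;>\;(\delta-1)\sqrt q-(\delta-1)\;>\;0.
\]
With the cruder estimate $(\delta-1)^2\sqrt q$ that your sketch records, the hypothesis $q>(\delta-1)^4$ leaves no slack for the additive lower-order term, and you would indeed only obtain the weaker $q\gg\delta^4$ that you anticipate. So the ``main obstacle'' is resolved not by squeezing the error terms harder, but by noticing that $\delta-1$ of the Jacobi sums are small.
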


\begin{corollary}\label{cor-all-squares-component}
If $q \equiv 5 \pmod 8$, $q\geq 29$, there is a nontrivial component in $\jf$ consisting only of square vertices. 
\end{corollary}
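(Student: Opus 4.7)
The plan is to exhibit a single directed edge in $\jf$ whose two endpoints are both square vertices. By Theorem~\ref{theoremSquareandNonsquareVertices}, the entire connected component containing such an edge then consists only of square vertices, and by Theorem~\ref{theoremNonemptyEdges} that component is automatically nontrivial and in fact contains a cycle.

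I will look for an edge of the form $(r^4, s^4) \to \left((r^4+s^4)/2,\, \pm r^2 s^2\right)$ for suitable $r, s \in \F_q^\times$. The parent $(r^4, s^4)$ has both coordinates squares, and the second coordinate of the child, $\pm r^2 s^2 = \pm(rs)^2$, is a square since $-1$ is a square in $\F_q$ (as $q\equiv 1\pmod 4$). Because $2$ is a non-square in $\F_q$ (using $q\equiv 5\pmod 8$), the first coordinate $(r^4+s^4)/2$ is a square if and only if $r^4+s^4$ is a non-square. Both endpoints will be valid vertices as soon as $r^4\neq \pm s^4$; the equality $r^4=-s^4$ cannot occur because $-1$ is not a 4th power when $q\equiv 5\pmod 8$, so the only genuine obstruction is $r^4\neq s^4$. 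Thus the problem reduces to finding $r, s\in\F_q^\times$ with $r^4\neq s^4$ and $r^4+s^4$ a non-square in $\F_q$.

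For $q>81$ I will apply Theorem~\ref{theoremksum} with $k=4$: since $\delta=\gcd(q-1,4)=4$, every element of $\F_q$ is a sum of two 4th powers. Pick any non-square $y\in\F_q^\times$ that does not lie in the coset $2\cdot\{z^4:z\in\F_q^\times\}$; such $y$ exist because there are $(q-1)/2$ non-squares but only $(q-1)/4$ elements in that coset. Writing $y=r^4+s^4$ then forces $r, s\in\F_q^\times$ (otherwise $y$ would be a 4th power, hence a square) and $r^4\neq s^4$ (otherwise $y=2s^4$, contradicting the choice of $y$). This yields the desired edge and concludes the proof for large $q$.

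The main obstacle is the four remaining small cases $q\in\{29,37,53,61\}$, for which Theorem~\ref{theoremksum} does not apply. I expect to settle these by direct verification in each field. For instance, when $q=29$ one may take $r=1$ and $s=8$: then $r^4+s^4=1+7=8$, which is a non-square in $\F_{29}$, and $s^4=7$, so $(1,7)\to(4,6)$ is an edge of $\J_{\F_{29}}$ between two square vertices, and the component containing it is the sought all-square nontrivial component. Analogous explicit choices should work for $q=37,53,61$.
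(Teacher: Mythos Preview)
Your proposal is correct and follows essentially the same approach as the paper: both arguments construct an edge of the form $(r^4,s^4)\to\bigl((r^4+s^4)/2,\,r^2s^2\bigr)$ by applying Theorem~\ref{theoremksum} with $k=4$ for $q>81$ and then handling $q\in\{29,37,53,61\}$ by direct inspection. The only cosmetic difference is the choice of target: the paper writes $2g^2=x^4+y^4$ for a fixed generator $g$ (so the child's first coordinate is the square $g^2$), whereas you write an arbitrary non-square $y\notin 2\cdot\{z^4\}$ as $r^4+s^4$; your counting argument that such $y$ exist is a pleasant touch, but the paper's specific choice $2g^2$ is just one such $y$, so the two arguments are interchangeable.
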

\begin{proof}
Let $g$ be a generator of $\F_q^{\times}$ and let $q>3^4$. We have $4 = (q-1,4)$ and from Theorem~\ref{theoremksum} we find $x$ and $y$ such that $x^4+y^4=2g^2$. If $x=\pm y$, then $x^4+y^4=2x^4=2g^2$, then $g=x^2$ or $g=(ix)^2$, which is a contradiction as $g$ is a generator of $\F_q^{\times}$. \\
So, we obtained an edge $(x^4,y^4) \rightarrow (g^2,x^2y^2)$. Both of the vertices are squares; hence, the component consists of square vertices by Theorem~\ref{theoremSquareandNonsquareVertices}.
For $q = 29, 37, 53, 61$, there is at least one square component, which we can compute by computer, for example, the components that contain cycles which contain
\begin{itemize}
\item $(9,6)$ for $q=29$;
\item $(27,4)$ for $q=37$;
\item $(25,7)$ for $q=53$;
\item $(1,9)$ for $q=61$.
\end{itemize}   
\end{proof}

\begin{theorem}\label{theoremMinimalField}
    There are $\frac{(q-1)(q-5)}{2}$ vertices in $\jf$. 
\end{theorem}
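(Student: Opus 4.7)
The plan is a direct counting argument. I first count all pairs $(a,b) \in (\F_q^\times)^2$ satisfying $\phi_q(ab)=1$, and then subtract the pairs excluded by the condition $a \neq \pm b$ in Definition~\ref{defJfq}.

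For the first count, I use the fact that $\phi_q \colon \F_q^\times \to \{\pm 1\}$ is a surjective group homomorphism, so exactly $(q-1)/2$ of the elements of $\F_q^\times$ are squares and $(q-1)/2$ are non-squares. For each fixed $a \in \F_q^\times$, the condition $\phi_q(ab) = \phi_q(a)\phi_q(b) = 1$ picks out exactly the $(q-1)/2$ elements $b$ of the same quadratic character as $a$. Hence there are $(q-1) \cdot (q-1)/2 = (q-1)^2/2$ pairs $(a,b)$ with $a,b \in \F_q^\times$ and $\phi_q(ab)=1$.

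Now I subtract the excluded pairs. The pairs with $a = b$ contribute $q-1$ examples, all of which trivially satisfy $\phi_q(ab) = \phi_q(a^2) = 1$. The pairs with $a = -b$ contribute $q-1$ examples, for each of which $\phi_q(ab) = \phi_q(-a^2) = \phi_q(-1)$. Here is the one place where the residue class of $q$ matters: since $q \equiv 5 \pmod 8$ implies $q \equiv 1 \pmod 4$, the element $-1$ is a square in $\F_q$, so $\phi_q(-1) = 1$ and all $q-1$ pairs $(a,-a)$ are counted in the previous step. These two excluded families are disjoint, because $a = b$ and $a = -b$ together force $2a = 0$, which is impossible in $\F_q^\times$ (recall $q$ is odd).

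Subtracting yields
\[
|V| = \frac{(q-1)^2}{2} - 2(q-1) = \frac{(q-1)(q-1-4)}{2} = \frac{(q-1)(q-5)}{2},
\]
which is the desired count. There is no real obstacle; the only subtle point worth emphasizing (and the reason the formula differs from the $\frac{(q-1)(q-3)}{2}$ of Theorem~\ref{theoremShape3mod4}) is that when $q\equiv 5 \pmod 8$ the element $-1$ is a square, so the entire family $\{(a,-a)\}$ must be removed in addition to $\{(a,a)\}$.
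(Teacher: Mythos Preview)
Your proof is correct and takes essentially the same approach as the paper: both are direct counts hinging on the fact that $-1$ is a square when $q\equiv 5\pmod 8$, so that both $a$ and $-a$ lie in the same quadratic class and must be excluded. The paper phrases it as ``$\frac{q-1}{2}$ squares each paired with $\frac{q-1}{2}-2$ others, likewise for non-squares,'' while you count all pairs with $\phi_q(ab)=1$ and then subtract the $2(q-1)$ forbidden diagonals; these are the same computation.
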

\begin{proof}
    There are $\frac{q-1}{2}$ squares which we can pair with $\frac{q-1}{2}-2=\frac{q-5}{2}$ squares (to avoid the pairs $(a,\pm a)$). The same is true for non-squares, so together, we have $\frac{(q-1)(q-5)}{2}$ vertices.
\end{proof}

\begin{example}
From Theorem~\ref{theoremMinimalField}, for $q=5$, there are no vertices.
\end{example}

\begin{example}
    Let $q = 13$. The vertices which contain $1$ as the first coordinate are $(1,3)$, $(1,4)$, $(1,9)$ and $(1,10)$,
    but none of them is part of an edge. Using symmetry and automorphisms $\varphi_\alpha$, we conclude that there are no edges in this graph.
\end{example}

\begin{example}
The smallest $q$ with nontrivial graph components is $q=29$. There is one cycle of length $28$ and $4$ cycles of length $7$.

\begin{figure}[!h]
    \centering
    \includegraphics[width = 15cm]{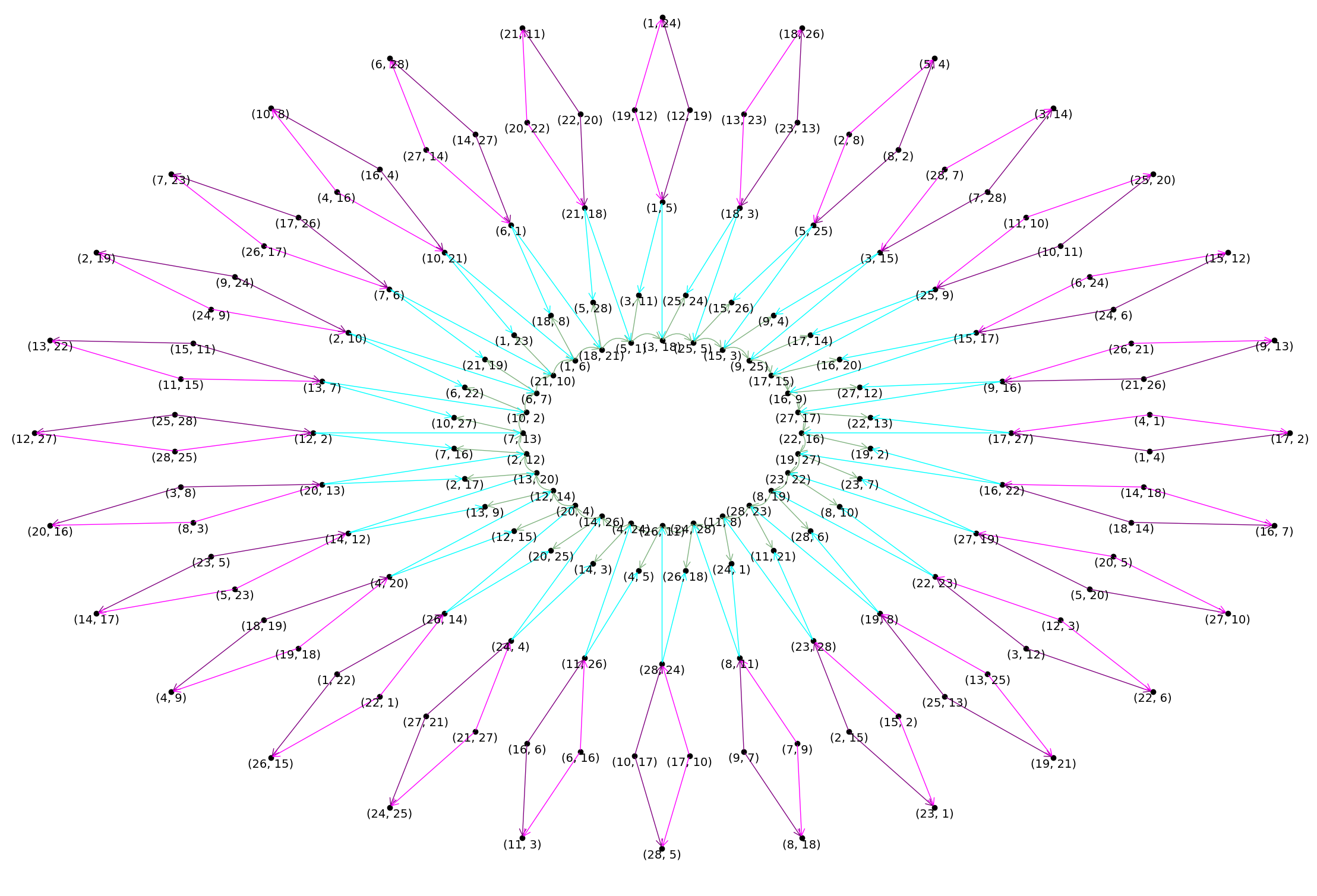}
\end{figure}
\begin{figure}[!h]
    \includegraphics[width = 7.2cm]{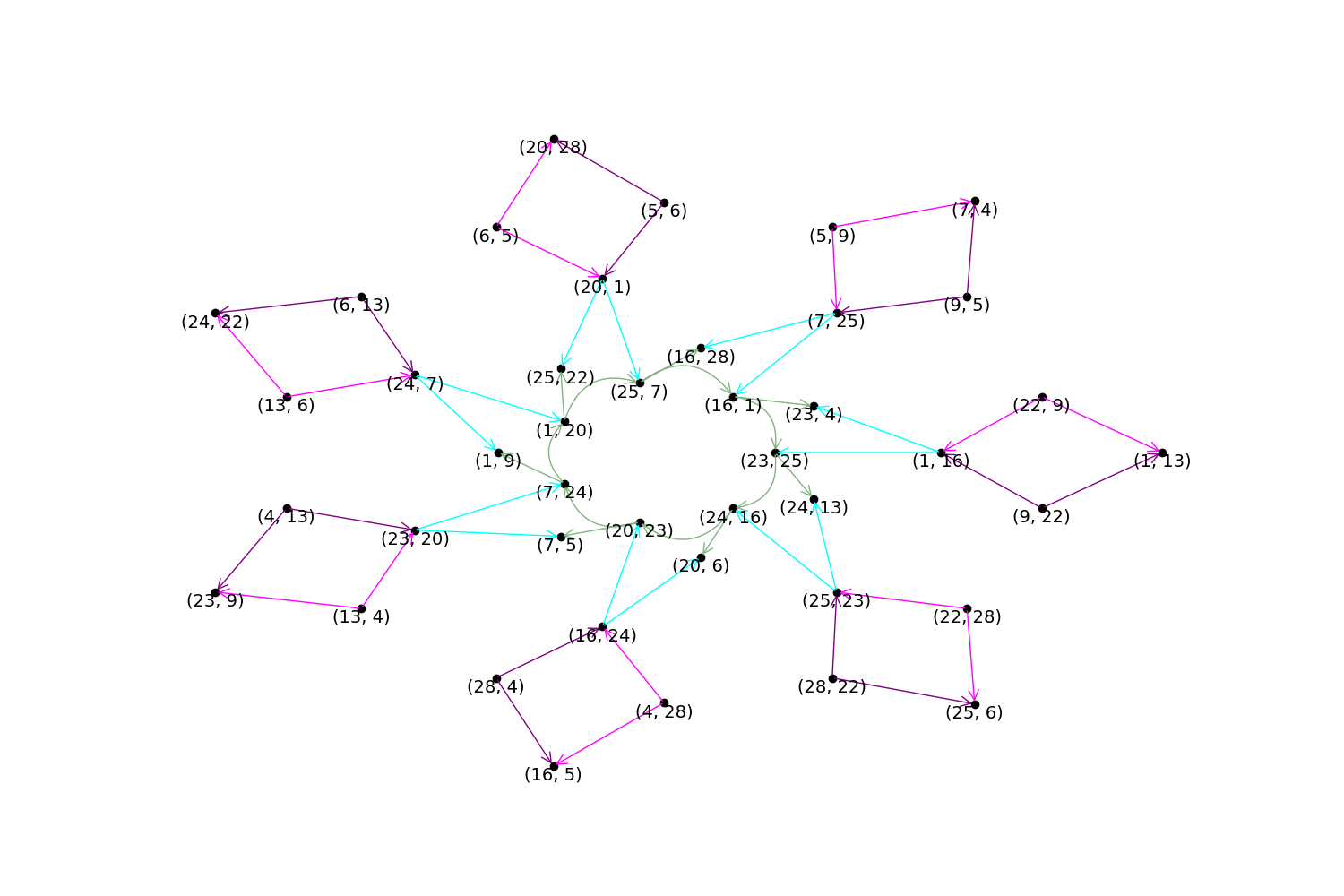}
       \includegraphics[width = 7.2cm]{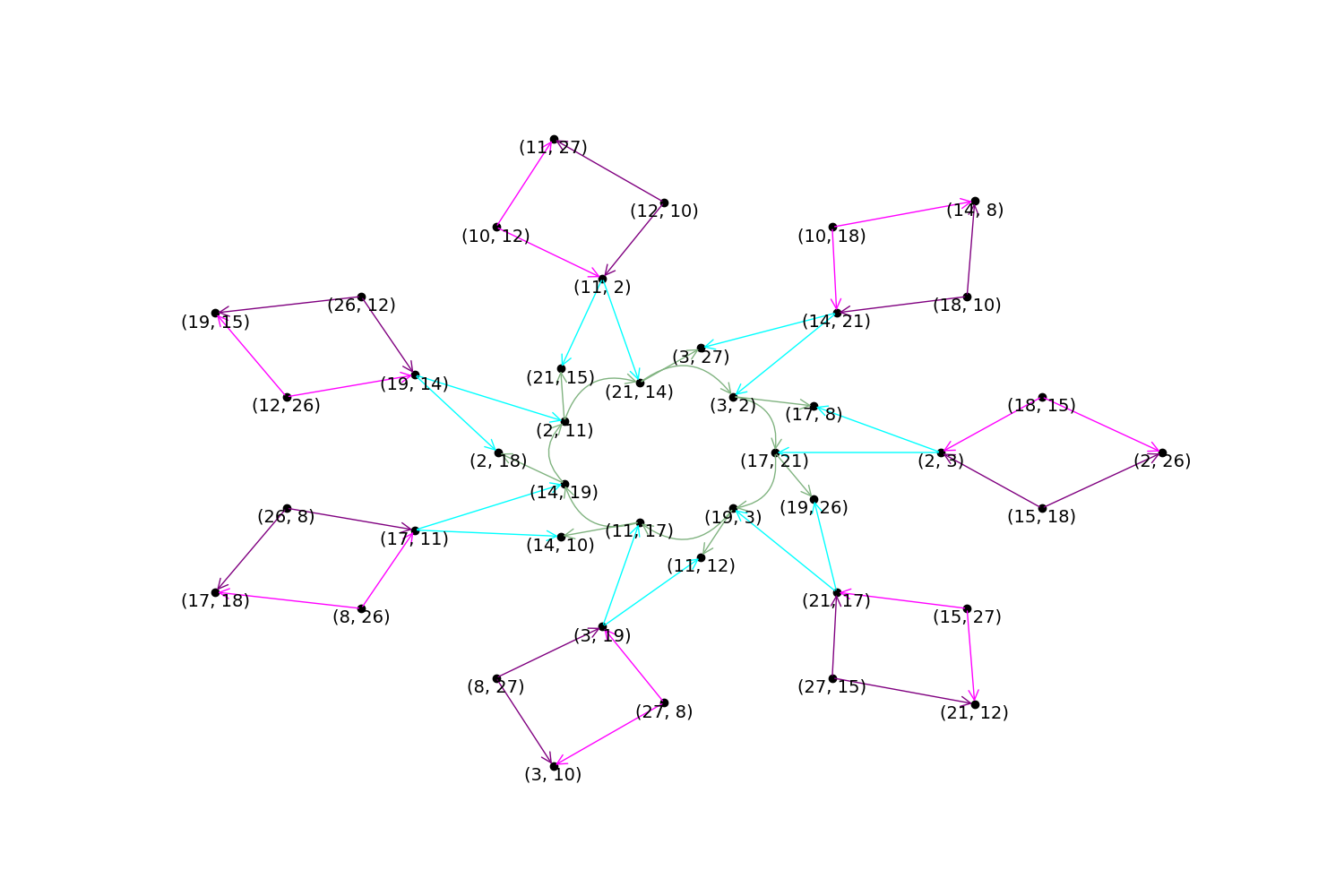}
       \end{figure}
       \begin{figure}[!h]
        \includegraphics[width = 7.2cm]{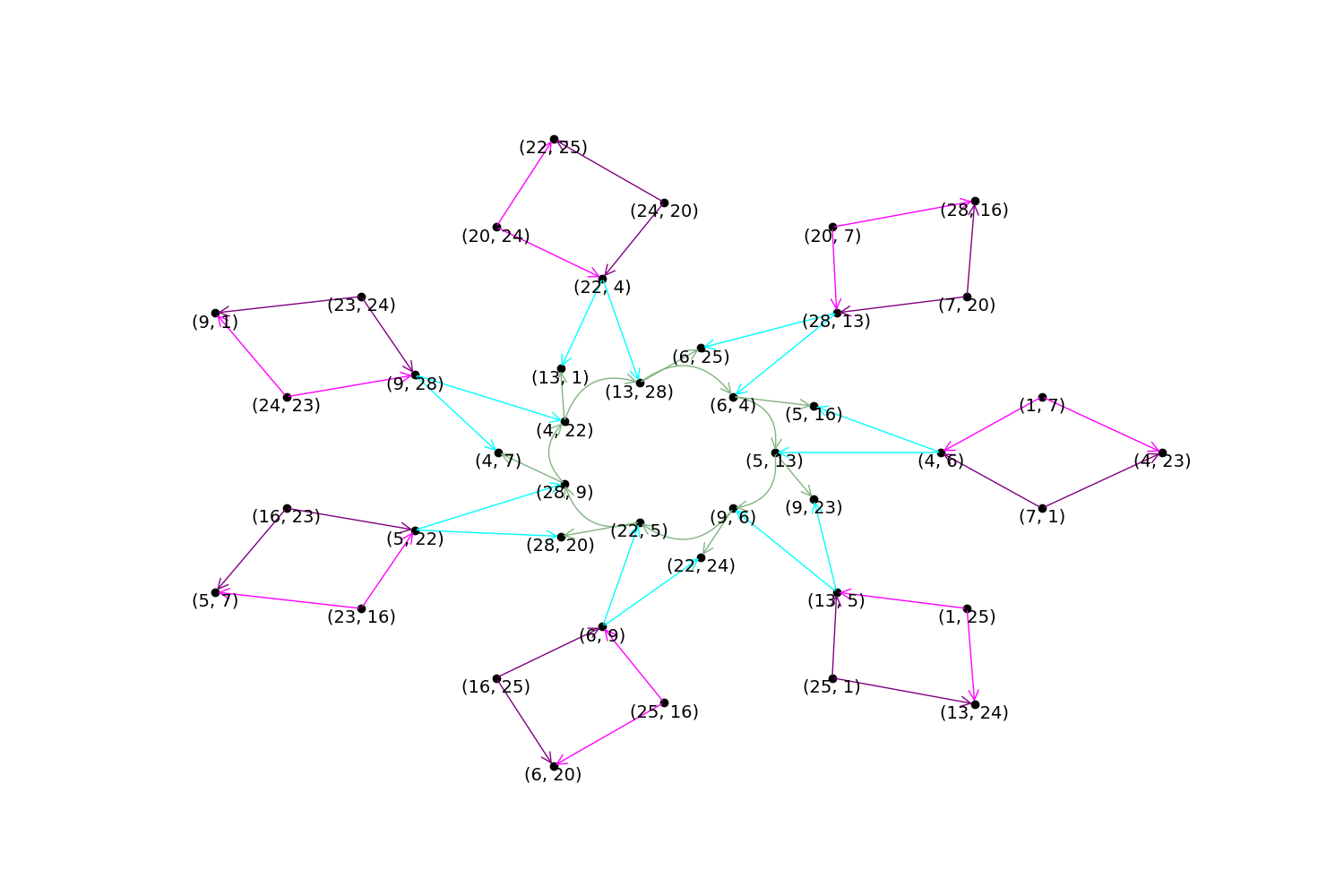}
        \includegraphics[width = 7.2cm]{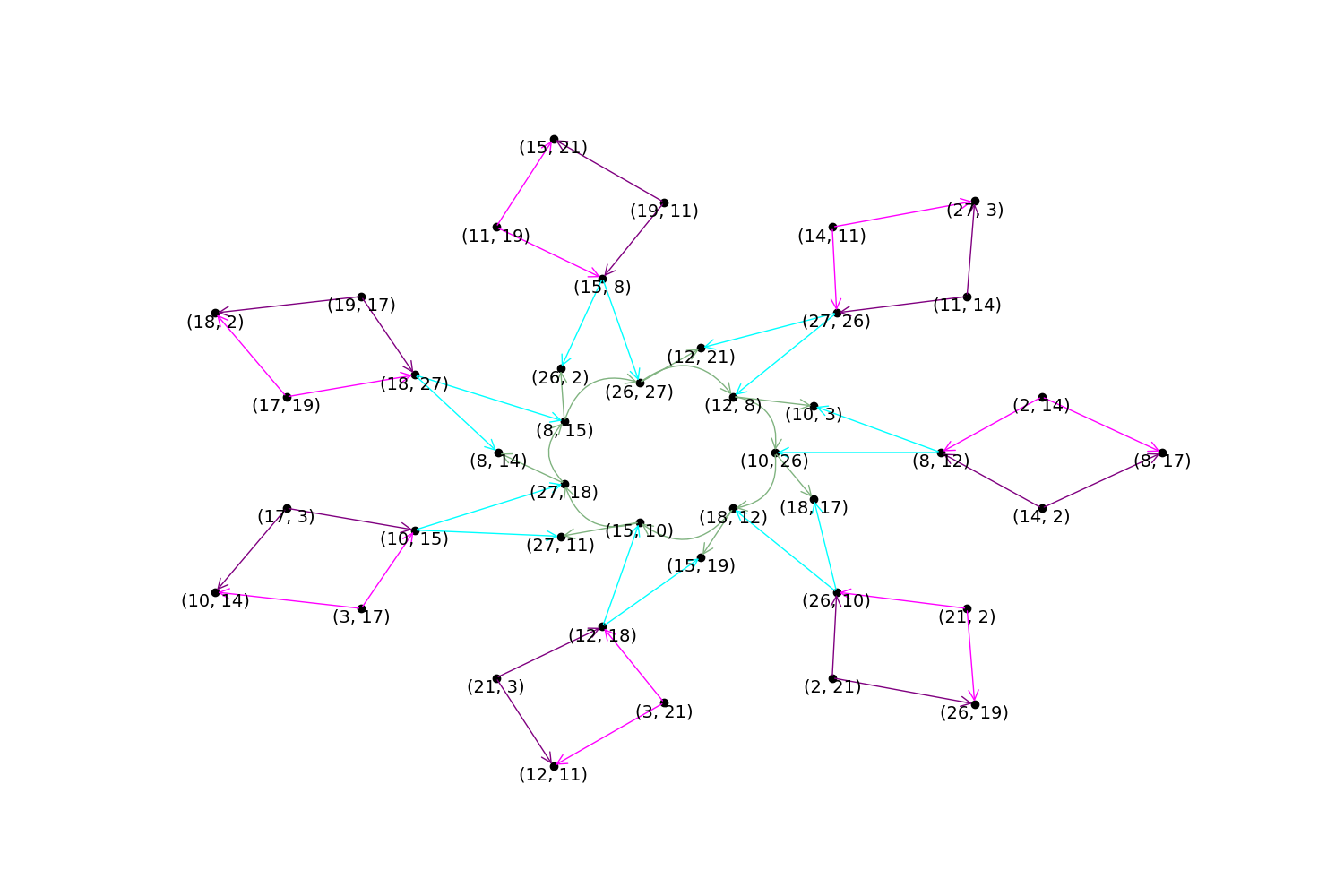}
    \caption{Components of $\J_{\F_{29}}$}
\end{figure}
\end{example}

\begin{section}{Challenge for $q\equiv 1\pmod{8}$.}\label{sec-challenge}
It might be tempting to try to generalise our approach to $q\equiv 1\pmod{8}$ in the following way.

For $q\equiv3\pmod{4}$, $-1$ is not a square in $\F_q$. If $(a_1,b_1)\in {\F_q^{\times}}^2$ is such that $\phi_q(a_1b_1)=1$, then 
\begin{equation*}
 \begin{tikzcd}
&  (a_1,b_1)  \arrow{ld} \arrow{rd}\\ 
  (a_2,b_2) &  & (a_2,-b_2).
\end{tikzcd}    
\end{equation*}
For exactly one of $(a_2,b_2)$ and $(a_2,-b_2)$, we have $\phi_q(\pm a_2b_2)=1$, and we use this pair for the definition of the sequence. This is the definition of the AGM over such $\F_q$ as in \cite{griffin2022agm}.\\

In this paper, we look at $q\equiv 5\pmod{8}$; then $-1$ is a square in $\F_q$ but $i\in \F_q$ such that $i^2=-1$, is not. Alternatively, the fourth root of unity is defined in $\F_q$, but not the eighth root of unity. We used this fact to define the AGM over such $\F_q$ whenever possible. Namely, assume that we have two steps, as in the following diagram

\begin{equation*}
\adjustbox{scale=0.8}{%
\begin{tikzcd}
& & & (a_1,b_1) \arrow{lld} \arrow{rrd}\\
& (a_2,b_2) \arrow{ld} \arrow{rd} & & & & (a_2,-b_2) \arrow{ld} \arrow{rd}\\
(a_3,b_3) & & (a_3,-b_3) & & (a_4,ib_3) & & (a_3,-ib_3). \\ 
\end{tikzcd} }   
\end{equation*}

We proved that $\phi_q(a_3b_3a_4ib_3)=-1$, and then we can continue with the AGM sequence in exactly one half of the diagram. But the key point here besides $\phi_q(i)=-1$ was that $\phi(a_3a_4)=1$ as this follows from Lemma~\ref{lemma-two-steps}, as $\phi_q(a_4)=\phi_q(a_3)=\phi_q(a_1)$.\\

Finally, let $q\equiv 1\pmod{8}$. Then, there is an eighth root of unity in $\F_q$, call it $\zeta_8$. If we furthermore assume that $q\equiv 9\pmod{16}$, then $\phi_q(\zeta_8)=-1$. We could try to look now at the first three steps of the AGM sequence:

\begin{equation*}
\adjustbox{scale=0.5}{%
\begin{tikzcd}
& & & & & & & (a_1,b_1) \arrow{lllld} \arrow{rrrrd}\\
& & & (a_2,b_2) \arrow{lld} \arrow{rrd} & & & & & & & & (a_2,-b_2) \arrow{lld} \arrow{rrd}\\
& (a_3,b_3) \arrow{ld} \arrow{rd} & & & & (a_3,-b_3) \arrow{ld} \arrow{rd} & & & & (a_4,ib_3) \arrow{ld} \arrow{rd} & & & & (a_3,-ib_3) \arrow{ld} \arrow{rd}  \\ 
(a_5,b_4) & & (a_5,-b_4) & & (a_6,ib_4) & & (a_6,-ib_4)  & &(a_7,b_5) & & (a_7,-b_5) & & (a_8,ib_5) & & (a_8,-ib_5). 
\end{tikzcd} }   
\end{equation*}

Here, again, if we know that we can continue for one element on the left or right half, we can continue for all of them. So, it suffices to consider pairs $(a_5,b_4)$ and $(a_7,b_5)$. Also, by Lemma~\ref{lemma-two-steps}, we have $\phi_q(a_5)=\phi_q(a_7)=\phi_q(a_2)$, so 
$\phi_q(a_5b_4a_7b_5)=\phi_q(b_4b_5)$. Unlike in the previous cases, it might not be true that $b_4b_5$ is a multiple of a square and $\zeta_8$. For $q\equiv 1\pmod{16}$, it is even less clear what to do up to this step. \\

Very recently, June Kayath, Connor Lane, Ben Neifeld, Tianyu Ni, Hui Xue, published a preprint \cite{NewAGM} in which they explain how to approach $q\equiv 1\pmod{8}$ in a clever way using elliptic curves. They achieve exciting results which extend \cite[Theorem 5]{griffin2022agm} about the number of components of the graphs corresponding to AGM sequences in cases $q\equiv 5\pmod{8}$ and $q\equiv1\pmod{8}$ (see \cite[Theorem 3]{NewAGM}).

We praise their results, and still, we leave a challenge for interested readers:\\

\noindent{\bf Question.} Is there an elementary way to prove that for sufficiently large $q\equiv 1\pmod{8}$, there is always a non-trivial component of the graph corresponding to the AGM sequence over $\F_q$?
\end{section}

\vfill\eject

\end{document}